\documentclass [leqno] {amsart}
  
\usepackage {amssymb}
\usepackage {amsthm}
\usepackage {amscd}

\usepackage[active]{srcltx}

\usepackage[all]{xy}
\usepackage {hyperref} 
\usepackage{mathrsfs}

\synctex=1

\begin{document}

\bibliographystyle{alpha}
\theoremstyle{plain}
\newtheorem{proposition}[subsection]{Proposition}
\newtheorem{lemma}[subsection]{Lemma}
\newtheorem{corollary}[subsection]{Corollary}
\newtheorem{thm}[subsection]{Theorem}
\newtheorem{introthm}{Theorem}
\newtheorem*{thm*}{Theorem}
\newtheorem{conjecture}[subsection]{Conjecture}
\newtheorem{question}[subsection]{Question}
\newtheorem{fails}[subsection]{Fails}

\theoremstyle{definition}
\newtheorem{definition}[subsection]{Definition}
\newtheorem{notation}[subsection]{Notation}
\newtheorem{condition}[subsection]{Condition}
\newtheorem{example}[subsection]{Example}
\newtheorem{claim}[subsection]{Claim}

\theoremstyle{remark}
\newtheorem{remark}[subsection]{Remark}

\numberwithin{equation}{subsection}

\newcommand{\eq}[2]{\begin{equation}\label{#1}#2 \end{equation}}
\newcommand{\ml}[2]{\begin{multline}\label{#1}#2 \end{multline}}
\newcommand{\mlnl}[1]{\begin{multline*}#1 \end{multline*}}
\newcommand{\ga}[2]{\begin{gather}\label{#1}#2 \end{gather}}
\newcommand{\mat}[1]{\left(\begin{smallmatrix}#1\end{smallmatrix}\right)}

\newcommand{\arir}{\ar@{^{(}->}}
\newcommand{\aril}{\ar@{_{(}->}}
\newcommand{\are}{\ar@{>>}}

\newcommand{\xr}[1] {\xrightarrow{#1}}
\newcommand{\xl}[1] {\xleftarrow{#1}}
\newcommand{\lra}{\longrightarrow}
\newcommand{\inj}{\hookrightarrow}

\newcommand{\mf}[1]{\mathfrak{#1}}
\newcommand{\mc}[1]{\mathcal{#1}}

\newcommand{\CH}{{\rm CH}}
\newcommand{\Gr}{{\rm Gr}}
\newcommand{\codim}{{\rm codim}}
\newcommand{\cd}{{\rm cd}}
\newcommand{\Spec} {{\rm Spec}}
\newcommand{\supp} {{\rm supp}}
\newcommand{\Hom} {{\rm Hom}}
\newcommand{\End} {{\rm End}}
\newcommand{\id}{{\rm id}}
\newcommand{\Aut}{{\rm Aut}}
\newcommand{\sHom}{{\rm \mathcal{H}om}}
\newcommand{\Tr}{{\rm Tr}}


\renewcommand{\P} {\mathbb{P}}
\newcommand{\Z} {\mathbb{Z}}
\newcommand{\Q} {\mathbb{Q}}
\newcommand{\C} {\mathbb{C}}
\newcommand{\F} {\mathbb{F}}

\newcommand{\OO}{\mathcal{O}}
\newcommand{\sO}{\mathcal{O}}
\newcommand{\sI}{\mathcal{I}}

\title[Vanishing of higher direct images]{Vanishing of the higher direct images of the structure sheaf}

\author{Andre Chatzistamatiou and Kay R\"ulling}
\address{Fachbereich Mathematik \\ Universit\"at Duisburg-Essen \\ 45117 Essen, Germany}
\email{a.chatzistamatiou@uni-due.de}
\address{Freie Universit\"at Berlin, Arnimallee 6, 14195 Berlin, Germany}
\email{kay.ruelling@fu-berlin.de}

\thanks{The first author has been supported by 
       the SFB/TR 45 ``Periods, moduli spaces and arithmetic of algebraic varieties'', the second author
        is supported by the ERC Advanced Grant 226257.}

\begin{abstract}
We prove that the higher direct images of the structure sheaf under a birational and projective morphism 
between excellent and regular schemes vanish.
\end{abstract}

\maketitle


\section{Introduction}

In this article we prove the following theorem.

\begin{thm}\label{thm1} 
Let $f: X\to Y$ be a projective and birational morphism between excellent and regular schemes. 
Then the higher direct images of $\sO_X$ under $f$ vanish, that is,
\[R^i f_*\sO_X=0, \quad \text{for all $i\ge 1$.} \]
\end{thm}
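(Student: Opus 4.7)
The strategy is to reduce to a local situation and then produce a splitting of the natural map $\sO_Y\to Rf_*\sO_X$ that is in fact a quasi-isomorphism. Since $Y$ is regular (hence normal) and $f$ is birational, the adjunction unit is already an isomorphism in degree $0$, so the content of the theorem is the vanishing in positive degrees.

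First I would make standard reductions. The statement is local on $Y$, so we may take $Y=\Spec A$ affine. Using that higher direct images commute with flat base change, together with the excellence hypothesis, we can further pass to the completion and assume $A$ is a complete regular local ring; by Cohen's structure theorem, $A$ is then a power series ring over a field or a Cohen ring. This reduction should also let us replace $X$ with its pullback to the completion, so all schemes in sight are Noetherian of the expected dimensions.

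The heart of the proof is to exhibit a left inverse $Rf_*\sO_X\to\sO_Y$ of the adjunction in the derived category, together with enough structure to force the complement to vanish. My plan is to use Grothendieck duality: since $f$ is projective birational between regular schemes of the same dimension, $f^!\sO_Y=\omega_{X/Y}$ is an invertible sheaf in degree $0$, and there is a canonical trace morphism $\Tr_f:Rf_*\omega_{X/Y}\to\sO_Y$. Via duality, producing the desired splitting of $\sO_Y\to Rf_*\sO_X$ is equivalent to producing a section $\sO_Y\to Rf_*\omega_{X/Y}$ of $\Tr_f$. I would try to build such a section cycle-theoretically on the fiber product $X\times_Y X$: the diagonal $\Delta_X$ acts as the identity on $Rf_*\sO_X$, and a decomposition of the diagonal class, together with the two projections, should yield an endomorphism of $Rf_*\sO_X$ that factors through $\sO_Y$ while still being the identity.

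The main obstacle is that neither Hironaka's resolution of singularities nor weak factorization of birational maps is available for excellent regular schemes in positive or mixed characteristic. Thus we cannot reduce to blow-ups along regular centers, where the vanishing is a classical computation, and we cannot appeal to Hodge-theoretic decompositions. The splitting must instead be produced intrinsically, by a delicate intersection-theoretic argument on the possibly singular scheme $X\times_Y X$, together with a careful analysis of the compatibility of the Grothendieck trace with correspondences. I expect this to be the technical heart of the paper, and the step where the regularity of both $X$ and $Y$ is used in an essential way.
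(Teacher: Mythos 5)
Your overall strategy coincides with the paper's: both produce, via Grothendieck duality and a fundamental--class/trace formalism, a factorization of the identity of $Rf_*\sO_X$ through $\sO_Y$, coming from the fact that the diagonal is the only component of $X\times_Y X$ dominating $X$; this immediately forces $R^if_*\sO_X=0$ for $i\ge 1$. One point of bookkeeping: a left inverse $p$ of the unit $\sO_Y\to Rf_*\sO_X$ is not by itself enough --- what is needed is that the composite the other way, $f^*\circ p$, is the identity of $Rf_*\sO_X$. You do state the correct form at the end (``an endomorphism of $Rf_*\sO_X$ that factors through $\sO_Y$ while still being the identity''), but the earlier phrasing about a splitting ``together with enough structure to force the complement to vanish'' conflates the two conditions.

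The genuine gap is that the entire mathematical content --- the assertion that the diagonal decomposition on $X\times_Y X$ exists and is compatible with the Grothendieck trace, i.e.\ that $f^!(f^*)\circ c_f$ equals the map adjoint to $\id_{Rf_*\sO_X}$ --- is only announced, not proved; and your proposed reduction to a complete regular local base does nothing toward establishing it (it is also not needed: the target identity lives in $\Hom_{D(X)}(\sO_X,f^!Rf_*\sO_X)$ and all the work happens on $X$ and on $X\times_Y X$, not on $Y$). You correctly observe that resolution of singularities and weak factorization are unavailable, but you give no substitute. The paper's route (Theorem \ref{thm2} and Section \ref{reductions}) needs no resolution at all: one first reduces the second leg $g$ of the correspondence from an arbitrary projective morphism to a closed immersion (factor through $\P^n_Y$ and use $R\pi_*\sO_{\P^n_Y}=\sO_Y$), then to a closed immersion of codimension one by blowing up the regular center $Z\subset Y$ --- the blow-up of a regular scheme along a regular center is again regular and its exceptional divisor is a projective bundle, so only this classical computation is used. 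In the codimension-one case the relevant square is tor-independent; after normalizing the source and discarding a codimension-$\ge 2$ subset --- harmless because $f^!\sO_Y$ is CM for the codimension filtration (Proposition \ref{proposition-vanishing-coh-of-support-dualizing-complex}) --- one is reduced to $f$ finite and flat, where the identity is the classical trace computation. Without an argument of this kind, your proposal is a plan rather than a proof.
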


In case $Y$ is of finite type over a characteristic zero field, this theorem was proved by Hironaka 
as a corollary of his work on the existence of resolutions of singularities, see \cite[(2), p.~144]{Hi}.
In a similar way, one can prove Theorem \ref{thm1} for $\dim Y=2$, see \cite[Prop.~1.2]{Li69}. 
If $Y$ is of finite type over a perfect field, then the theorem holds by \cite[Cor 3.2.10]{CR11}. The proof in {\it loc.~cit.}~uses the action of correspondences on Hodge cohomology. These methods do not seem to generalize to an arithmetic setup. Instead, in this article, we give a more direct proof, which relies on Grothendieck-Serre duality.

In view of \cite{Lipman-CM} and \cite{SanchoSalas}, we obtain the following application in commutative algebra.

\begin{thm}\label{thm-commutative-algebra}
	Let $R$ be an excellent regular local ring, and let $I\subset R$ be an ideal such that the blow up $X={\rm Proj}\, \bigoplus_{n\geq 0} I^n$ is regular. The following statements hold.
	\begin{enumerate}
		\item There is $e>0$ such that the Rees algebra $\bigoplus_{n\geq 0} I^{en}$ is Cohen-Macaulay.
		\item There is $e>0$ such that the associated graded algebra $\bigoplus_{n\geq 0} I^{en}/I^{e(n+1)}$ is Cohen-Macaulay.
	\end{enumerate}
\end{thm}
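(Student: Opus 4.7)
The approach is to combine Theorem \ref{thm1} with relative Serre vanishing on the blow-up $X$, and then feed the resulting cohomological information into the criteria of Lipman (\cite{Lipman-CM}) and Sancho de Salas (\cite{SanchoSalas}), which translate the Cohen-Macaulayness of a Rees algebra, resp.\ of an associated graded ring, into vanishing statements for the cohomology of twists of $\sO_X$.

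First I would observe that the structure morphism $f: X \to Y := \Spec R$ is a projective birational morphism between excellent regular schemes (birationality follows because $I$ is a nonzero ideal in the domain $R$, so $f$ is an isomorphism over $\Spec R \setminus V(I)$). Hence Theorem \ref{thm1} applies and gives $R^i f_* \sO_X = 0$ for every $i \geq 1$. The tautological line bundle $\sO_X(1)$ is $f$-ample, so by relative Serre vanishing there exists $e_0 \geq 1$ such that
$$R^i f_* \sO_X(m) = 0 \quad \text{for all } i \geq 1 \text{ and all } m \geq e_0.$$
Choosing any $e \geq e_0$ and combining the two vanishing statements yields $R^i f_* \sO_X(en) = 0$ for all $i \geq 1$ and all $n \geq 0$; since $Y$ is affine this is equivalent to $H^i(X, \sO_X(en)) = 0$ in the same range.

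For the last step I would invoke the criterion of Sancho de Salas from \cite{SanchoSalas}: when $R$ and $X$ are both Cohen-Macaulay, the vanishing of $H^i(X, \sO_X(en))$ for $i \geq 1$ and $n \geq 0$ implies that $\bigoplus_{n \geq 0} I^{en}/I^{e(n+1)}$ is Cohen-Macaulay. Both CM hypotheses are automatic in our situation since $R$ and $X$ are assumed regular, so (2) follows. Statement (1) is then a consequence of the standard fact that Cohen-Macaulayness of $\mathrm{gr}_{I^e}(R)$ propagates to Cohen-Macaulayness of the Rees algebra $\bigoplus_{n \geq 0} I^{en}$; alternatively one can apply Lipman's criterion from \cite{Lipman-CM} directly to the same cohomological input.

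The main obstacle is not the cohomology vanishing itself --- Theorem \ref{thm1} supplies exactly the nontrivial case $n = 0$ and Serre vanishing handles $n \geq 1$ --- but rather the bookkeeping needed to apply the criteria of \cite{SanchoSalas} and \cite{Lipman-CM} in the excellent, possibly mixed-characteristic, setting. In particular one must carefully identify the graded pieces $I^{en}$ of the Rees algebra with the global sections $H^0(X, \sO_X(en))$; passing to a sufficiently high Veronese $\bigoplus_{n \geq 0} I^{en}$ for $e \gg 0$ is precisely what ensures both that this identification holds and that the Rees algebra is generated in degree one, which is why the statement is formulated with the auxiliary integer $e$.
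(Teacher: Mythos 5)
Your reduction to the local statement and the vanishing $H^i(X,\sO_X(en))=0$ for $i\ge 1$, $n\ge 0$ and $e\gg 0$ (Theorem \ref{thm1} for $n=0$, relative Serre vanishing for $n\ge 1$) are fine, and this is precisely the input that Lipman's Theorem 4.1 converts into assertion (1). The gap lies in your treatment of assertion (2) and in the passage from (2) to (1). The criterion you attribute to Sancho de Salas is not his criterion: the Cohen--Macaulayness of the associated graded ring is controlled by $H^i_E(X,\sO_X)$ (local cohomology along the closed fibre), equivalently by the vanishing of $H^i(X,\omega_X)$ for $i>0$ --- not by the vanishing of $H^i(X,\sO_X(en))$, which is the criterion for the \emph{Rees} algebra. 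Consequently you never verify the hypothesis actually needed for (2); in the paper this is done by applying Grothendieck--Serre duality to $Rf_*\sO_X\cong\sO_Y$ (both schemes being regular, $f^!\sO_Y$ is the invertible sheaf $\omega_{X/Y}$ placed in degree $0$, so duality turns $R^if_*\sO_X=0$ into $H^i(X,\omega_X)=0$). This duality step is the one genuinely non-formal ingredient in the proof of (2), and it is absent from your argument.

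The deduction of (1) from (2) is also backwards. The ``standard fact'' goes the other way: Cohen--Macaulayness of the Rees algebra implies that of the associated graded ring (Huneke), while the converse fails in general and requires in addition the negativity of the $a$-invariant (Trung--Ikeda). Your argument can be repaired by reversing the logic: establish (1) directly from $H^i(X,\sO_X(en))=0$ via Lipman's Theorem 4.1 (your ``alternatively'' clause already points there), and then deduce (2) from (1) by Huneke's theorem; that would be a legitimate alternative to the paper's duality argument. As written, however, (2) rests on a mis-stated criterion and (1) rests on a false implication.
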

It is worth noting that assertion (1), or equivalently assertion (2), implies the vanishing $H^i(X,\OO_X)=0$, for all $i>0$, hence Theorem \ref{thm1} after an easy reduction to the local case.  

By using the main result of \cite{BBE07}, we obtain the following application, which was know if $X$ and $Y$ are defined over a finite field \cite[Theorem~1.1]{FR}.    
\begin{thm}\label{thm-points}
Let $f:X\to Y$ be as in Theorem~\ref{thm1}. Let $k$ be a finite field and let $s: \Spec\, k\to Y$ be a morphism. Denote by
$X_s= X\times_Y \Spec\, k$ the base change of $f$ along $s$. Then
the number of $k$-rational points of $X_s$ is congruent to 1 modulo the cardinality of $k$, that is,
\[|X_s(k)|\equiv 1 \text{ mod } |k|.\]
\end{thm}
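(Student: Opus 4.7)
The plan is to combine Theorem~\ref{thm1} with the main result of~\cite{BBE07}, which asserts that for any proper scheme $Z$ over a finite field $k$ with $|k| = q = p^a$,
\[
|Z(k)| \equiv \sum_i (-1)^i \Tr\bigl(F^a \mid H^i(Z, W\sO_{Z,\Q})\bigr) \pmod{q},
\]
where $W\sO_\Q$ is the rationalized Witt-vector structure sheaf and $F$ is the Witt-Frobenius. For $Z = X_s$ it therefore suffices to show that $R\Gamma(X_s, W\sO_{X_s,\Q})$ is concentrated in degree zero and isomorphic to $W(k) \otimes_\Z \Q$ with $F^a$ acting as the identity --- just as for the point $\Spec k$ --- so that the Lefschetz trace equals $1$.

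After replacing $Y$ by the spectrum of the henselization of $\sO_{Y,y}$ at $y := s(\Spec k)$ (which does not alter $X_s$), Theorem~\ref{thm1} yields $Rf_*\sO_X = \sO_Y$. The projection formula then gives the derived-fiber identity
\[
R\Gamma\bigl(X, Lf^* s_*\sO_{\Spec k}\bigr) \;\simeq\; R\Gamma\bigl(Y, s_*\sO_{\Spec k}\bigr) \;=\; k.
\]
The core of the argument is to upgrade this identity from $\sO$ to $W\sO_\Q$. One route is to derive-tensor $Rf_*\sO_X = \sO_Y$ with $\Z/p^n\Z$ --- using that $p$ is a nonzero-divisor on the regular schemes $X$ and $Y$ --- obtaining $Rf_*\sO_{X/p^n} = \sO_{Y/p^n}$ for every $n$, and then to apply a second projection formula with the perfect $\sO_Y$-modules $s_* W_n(k)$ (perfect because $Y$ is regular). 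This should yield $R\Gamma(X_s, W_n\sO_{X_s}) \simeq W_n(k)$ level by level; taking $R\lim_n$ and inverting $p$ delivers the required identification with $W(k)_\Q$, to which \cite{BBE07} then applies.

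The principal obstacle is the mixed-characteristic, ramified case. When $A := \sO_{Y,y}^h$ is ramified at $p$, the reductions $A/p^n$ do not match the truncated Witt vectors $W_n(k)$ of the residue field in a manner compatible with the Witt-vector sheaves of the fiber, so the inductive step above cannot be run directly. One circumvents this by first passing via a flat local extension $A \hookrightarrow A'$ to an absolutely unramified regular local ring with the same residue field (which exists by Cohen's structure theorem), proving the statement there, and then descending. A further subtlety is that the discrepancy between the naive fiber $X_s$ and the derived fiber --- encoded in the higher Tor sheaves $\mathrm{Tor}^{\sO_Y}_i(s_*\sO_{\Spec k}, \sO_X)$ --- is $p$-power torsion and vanishes only after tensoring $W\sO$ with $\Q$; this is precisely why BBE's rationalized congruence is the natural framework for the argument.
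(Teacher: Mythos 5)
Your entry point is the same as the paper's: reduce via the congruence formula of \cite{BBE07} to showing that $H^0(X_s,W\sO_{X_s})\otimes_W K_0=K_0$ and $H^i(X_s,W\sO_{X_s})\otimes_W K_0=0$ for $i\ge 1$. But the mechanism you propose for the higher cohomology does not work. First, the ``second projection formula with the perfect $\sO_Y$-modules $s_*W_n(k)$'' presupposes a ring homomorphism $\sO_{Y,y}\to W_n(k)$ lifting the residue map, and $W_n\sO_{X_s}$ would have to be an $\sO$-linear pullback for a projection formula to apply; neither holds. Already for $\sO_{Y,y}=\Z_p[\pi]/(\pi^2-p)$ there is no map $\sO_{Y,y}/p^2\to W_2(\F_p)=\Z/p^2$ (one would need $x$ with $x\equiv 0 \bmod p$ and $x^2=p$). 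You recognize this obstacle, but the proposed repair is impossible: a \emph{flat local} extension $A\hookrightarrow A'$ with $A'$ absolutely unramified and the same residue field cannot exist when $A$ is ramified, since $p\in\mathfrak{m}_A^2$ forces $p\in\mathfrak{m}_A^2A'\subset\mathfrak{m}_{A'}^2$. Cohen's theorem presents $\widehat{A}$ as a \emph{quotient} of an unramified regular ring (or a finite extension of one), not a flat local subring of one, and passing to a finite cover of $Y$ would destroy the birationality hypothesis needed for Theorem \ref{thm1}.

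Second, the closing remark that the defect is ``$p$-power torsion and vanishes after tensoring with $\Q$'' conceals the genuinely hard step. The paper's route is: reduce to $s$ a closed point; compute $H^0$ by a Stein-factorization argument ($A=H^0(X_s,\sO_{X_s})$ is artinian local with residue field $k$, whence $W(A)\otimes K_0=K_0$ using $FV=VF=p$) --- a point you assert but do not prove; then work on the characteristic-$p$ fibre $X_p=X\times_{\Z}\F_p$, where the Verschiebung exact sequences $0\to W_{n-1}\sO_{X_p}\xr{V}W_n\sO_{X_p}\to\sO_{X_p}\to 0$ (sequences of abelian sheaves, not $\sO$-modules) together with $R^if_*\sO_{X_p}=0$ and an $R\varprojlim$ argument give $R^if_*W\sO_{X_p}=0$ for $i\ge1$; and finally compare $W\sO_{X_p}$ with $W\sO_{X_s}$ via the ideal sheaf $\sI$ of $X_s$ in $X_p$, using the nontrivial vanishing $(R^{i+1}f_*W\sI)\otimes K_0=0$ from \cite[Prop.~4.6.1]{CR12} (a variant of \cite[Thm.~2.4(i)]{BBE07}). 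That last input --- not a formal Tor computation --- is what makes the rationalized Witt cohomology of the actual fibre computable, and it is missing from your argument. So the proposal identifies the right framework but does not contain a proof.
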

See Section \ref{proofs} for a proof of this theorem. Theorem \ref{thm1} is a consequence of Theorem \ref{thm2} below. 
We have to introduce some notations to state it. 
From now on, all schemes in this introduction will be assumed to be separated, noetherian, 
excellent (see \cite[7.8]{EGAIV2}), and will admit a dualizing complex (see \cite[V]{Ha}). 
Let $f: X\to Y$ be a finite type morphism between integral schemes that is dominant, generically finite, and has a regular target $Y$. 
Then we define a morphism (see Proposition \ref{proposition-existence-fund-class})
\[c_f: \sO_X\to f^!\sO_Y\quad \text{in }D(Y).\]
It is a version of the fundamental class constructed for flat morphisms in \cite{ElZ78},
\cite{AEZ78} and \cite{ATJLL14}. In case $f$ is a proper complete intersection morphism of virtual dimension 0, 
the morphism $c_f$ corresponds by adjunction to the trace morphism $\tau_f: Rf_*\sO_X\to \sO_Y$ constructed 
in \cite[Thm 3.1]{BER}.

Furthermore, let $\sigma_A$ be a commutative square
\[\xymatrix{ V\ar[d]_f & A\ar[d]^{f_A}\ar[l]_{g_A}\ar@{}[dl]|-{\sigma_A}\\
                   Y & Z\ar[l]^g,
}\]
in which $f$ is a morphism of finite type and $g, g_A$ are proper. From duality theory, we obtain a natural transformation 
of functors $D^b_{\rm c}(Z)\to D^+_{\rm c}(V),$
\[\xi_{\sigma_A}: Rg_{A*}f_A^!\to f^!Rg_*.\]
\begin{thm}\label{thm2}
Consider the following  diagram 
$$
\xymatrix
{
	V \ar[d]_{f}
	&
	\\
	Y
	&
	Z,\ar[l]^{g}
}
$$
where we assume that the following conditions are satisfied:
\begin{enumerate}
	\item $V,Y,Z$ are integral schemes, $Y,Z$ are regular,
	\item $f$ is of finite type, dominant and generically finite, the base change $V\times_Y Z\xr{} Z$ is generically finite,
	\item $g$ is projective.
\end{enumerate}
Then the following equality holds in $\Hom_{D(V)}(\OO_V,f^!Rg_*\OO_Z)$:  
\begin{multline*}
	[\OO_V\xr{c_f} f^!\OO_Y \xr{f^!(g^*)} f^!Rg_*\OO_Z]= \\ 
          \sum_{A} \ell_A \cdot [\OO_V\xr{g_A^*} R g_{A*}\OO_A  \xr{Rg_{A*}(c_{f_A})} 
            R g_{A*} f^!_A\OO_Z \xr{\xi_{\sigma_A}}  f^!Rg_*\OO_Z ],
\end{multline*}
where the sum runs over all irreducible components $A$ of $V\times_Y Z$ that {\em dominate} $Z$,
$\ell_A$ is the multiplicity of $A$ in the generic fiber over $Z$, and $\sigma_A$ is a commutative diagram
as above, where $g_A$ and $f_A$ are induced by the composition of the closed immersion $A\inj V\times_Y Z$ followed 
by the projection to $V$ and $Z$, respectively.
\end{thm}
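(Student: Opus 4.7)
The plan is to reduce both sides of the asserted identity to a computation at the generic point of $Z$, where the statement becomes the elementary additivity of the trace for a finite morphism between Artinian schemes, and then to lift the generic-fiber identity back to an equality in $D(V)$ using the rigidity built into the construction of $c_f$.

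For the first step, I would analyze the combinatorics of the base change. Since $V \times_Y Z \to Z$ is generically finite, the scheme $W_K := V \times_Y \Spec k(Z)$ is Artinian, and its connected components are in bijection with the irreducible components $A \subset V \times_Y Z$ that dominate $Z$. The local ring at the component indexed by $A$ has length $\ell_A$ over $k(Z)$. Restricting the entire diagram along the flat morphism $\eta_Z := \Spec k(Z) \to Z$, the natural transformation $\xi_{\sigma_A}$ specializes to the (flat) base-change isomorphism for $f^{!}$, and by the construction of the fundamental class in Proposition \ref{proposition-existence-fund-class}, the restrictions of $c_f$ and $c_{f_A}$ to the generic fiber agree with the canonical trace maps of the corresponding finite morphisms of Artinian schemes.

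Under this reduction the identity to be proven becomes the assertion that the trace of the finite morphism $W_K \to \Spec k(Y)$ decomposes as the sum over dominating $A$ of $\ell_A$ times the trace of the individual factor $\Spec R_A \to \Spec k(Y)$, where $R_A$ denotes the local ring of $W_K$ corresponding to $A$. This is the classical additivity of the trace for a finite map whose source decomposes as a product of Artinian local algebras, the weights $\ell_A$ appearing precisely as the lengths of these local factors.

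The main obstacle, and the step that I expect to require the most care, is the ascent from the generic-fiber identity back to an equality in $\Hom_{D(V)}(\OO_V, f^! Rg_* \OO_Z)$. My plan is to use two compatibilities: first, the uniqueness statement implicit in Proposition \ref{proposition-existence-fund-class} that characterizes $c_f$ by its restriction to the generic fiber of $f$; and second, the naturality of $\xi_\sigma$ under flat base change along $\eta_Z \to Z$. Combined, these force both sides of the claimed identity to be the unique morphisms extending a common value at $\eta_Z$, which then yields equality in $D(V)$. Making the derived-categorical version of these naturality and uniqueness statements precise, especially verifying that $\xi_\sigma$ interacts correctly with the base-change along $\eta_Z\to Z$, is where I anticipate the bulk of the technical effort will go.
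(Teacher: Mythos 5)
Your proposal identifies the correct ``local'' computation --- at the generic point of $Z$ the identity does reduce to additivity of traces for finite flat algebras, and this is essentially the content of the paper's Lemma \ref{lem-finite-flat-case} --- but the two surrounding steps, which you defer, are exactly where the real difficulties lie, and as described they do not go through. First, you never reduce from a general projective $g$ to a situation where ``the generic fiber over $Z$'' controls anything. For $g$ projective (e.g.\ a blow-up), the square $\sigma$ is not tor-independent, $\xi_{\sigma_A}$ is not an isomorphism, $f^!Rg_*\OO_Z$ is not concentrated near $f^{-1}(g(Z))$, and the components of $V\times_Y Z$ may have excess dimension, so $f_A$ need not satisfy condition \eqref{*} and $c_{f_A}$ need not be governed by its generic fiber. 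The paper spends Corollary \ref{cor-closed-implies-proj} and Proposition \ref{proposition-divisor-implies-closed} (a dévissage through $\P^n_Y$ and through the blow-up of $Y$ along $Z$) reducing to the case where $g$ is a closed immersion of codimension $1$; only then is the square tor-independent, only then is every component of $V\times_Y Z$ a divisor in $V$ so that $f_1$ satisfies \eqref{*}, and only then can one identify $\Hom(\OO_V,f^!Rg_*\OO_Z)$ with $\Gamma(V\times_YZ,\mathcal{H}^0(f_1^!\OO_Z))$. Your proposal is silent on all of this.

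Second, and more seriously, the ``ascent'' you propose fails: restriction to the generic fiber $W_K=V\times_Y\Spec k(Z)$ is \emph{not} injective on $\Hom_{D(V)}(\OO_V,f^!Rg_*\OO_Z)$. The uniqueness in Proposition \ref{proposition-existence-fund-class} comes from Proposition \ref{proposition-vanishing-coh-of-support-dualizing-complex}, which gives injectivity of restriction to \emph{dense open subsets of the source}; the generic fiber over $Z$ is not dense in $V\times_YZ$ whenever $V\times_YZ$ has irreducible components that do not dominate $Z$, and on such components $\mathcal{H}^0(f_1^!\OO_Z)$ is a nonzero (dualizing-type) sheaf. So a section can vanish identically on $W_K$ without vanishing; your two sides could a priori differ by a contribution supported on the non-dominating components, invisible to your generic-fiber computation. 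This is precisely why the paper, after normalizing $V$ and discarding a codimension-$2$ locus so that $V$ is regular and the components of $V\times_YZ$ are disjoint, must prove \emph{directly} (Step 3, via Proposition \ref{prop-properties-FundCl-ci0}(4) and the explicit formula \eqref{delta2} for $\delta_f$) that the component of the left-hand side along each non-dominating $A_i$ is $c_{f_{A_i}}=0$. That vanishing is a genuine input about the fundamental class, not a consequence of rigidity. (A smaller slip: the target of the trace in your Artinian computation should be $\Spec k(Z)$, not $\Spec k(Y)$, and the base change $\Spec k(Z)\to Y$ along which you would have to move $c_f$ is not flat, so the ``flat base-change'' compatibilities you invoke do not apply to the left-hand side as stated.)
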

The formulation of this theorem is reminiscent of intersection theory. Indeed, methods from intersection theory are used
implicitly in the proof, which occupies most of the paper. 
In Section \ref{fc} we define the map $c_f$ and establish its main properties; in Section \ref{xi}
we give the definition and main properties for the map $\xi_\sigma$; in Section \ref{reductions}
we give the main reduction steps for the proof of Theorem \ref{thm2}; in Section \ref{proofs}
we  prove the above theorems.

Let us conclude with a list of open questions to which we hope to come back in the future:
\begin{enumerate}
\item Is Theorem \ref{thm1} or Theorem \ref{thm2} also true if one replaces "projective" by "proper"?
\item Let $S$ be an excellent scheme  and $f:X\to S$ and $g:Y\to S$ regular $S$-schemes. Assume 
        $X$ and $Y$ are properly birational over $S$, i.e. there exist birational and proper $S$-morphisms
          $V\to X$, $V\to Y$. Do we have an isomorphism $Rf_*\sO_X\cong Rg_*\sO_Y$?
         (In case $S$ is separated and of finite type over a perfect field this holds by \cite[Thm 1]{CR11}.)
\item What kind of singularities can we allow for $Y$ in order that the vanishing of Theorem \ref{thm1} still holds?
\end{enumerate}

\subsection*{Acknowledgements} We are grateful to  H{\'e}l{\`e}ne Esnault for her constant support and helpful remarks. We also thank the anonymous referee for pointing out that we obtain Theorem \ref{thm-commutative-algebra} as an application.

\subsection*{Conventions}\label{convention}
All schemes in the Sections \ref{fc} - \ref{reductions}
are assumed to be separated, noetherian and to admit a dualizing complex. 
We say a morphism $f: X\to Y$ is projective if it can be factored as a closed immersion $X\inj \P^n_Y$ 
followed by the projection $\P^n_Y\to Y$.
For a scheme $X$, we use the notation $D^*(X)$ and  $D^*_{{\rm c}}(X)$, with $*\in \{-,+,b\}$, as in \cite{Ha}, \cite{Co}.

\section{Fundamental class}\label{fc}

Let $f:X\xr{} Y$ be a map of finite type. By \cite[\textsection3.3]{Co} we have 
$$
f^!:D^+_{{\rm c}}(Y) \xr{} D^+_{{\rm c}}(X)
$$
at our disposal. Whenever $f$ is proper, the trace ${\rm Tr}_f:Rf_*\circ f^!\xr{} 1$ induces an isomorphism 
\begin{equation}\label{equation-adj}
\Hom_{D(X)}(F,f^!G) \cong \Hom_{D(Y)}(Rf_*F,G),
\end{equation}
provided that $F\in D^-_{{\rm c}}(X)$ and $G\in D^+_{{\rm c}}(Y)$. In particular, we obtain a natural transformation ${\rm Tr}^{\vee}_f:1\xr{} f^!\circ Rf_*$ of functors $D^b_{{\rm c}}(X)\xr{} D^+_{{\rm c}}(X)$ satisfying 
\begin{align}
	\id_{Rf_*(F)} &= \Tr_f(Rf_*F)\circ Rf_*(\Tr_f^\vee(F)), \label{adj1}\\
	\id_{f^!(G)}  &=  f^!\Tr_f(G)\circ \Tr_f^\vee(f^!G), \label{adj2}      
\end{align}
for  $F\in D^b_{\rm c}(X)$ and  $G\in D^b_{\rm c}(Y)$ such that $f^!G\in  D^b_{\rm c}(X)$. 

For a closed immersion $f$ the functor $f^!$ is right adjoint to $Rf_*$ when considered as functors on $D^+_{{\rm c}}$.

\subsection{Fundamental class}

Recall from \cite[VIII]{SGA6} that a morphism $f: X\to Y$ is a {\em complete intersection of virtual relative
dimension 0 (ci0 for short)} if any point $x\in X$ has an open neighborhood $U\subset X$, 
such that the restriction of $f$ to $U$  factors as 
\begin{equation} \label{ci0}
	f_{|U}=\pi\circ i,
\end{equation}
where $i:U\inj P$ is a regular closed immersion of codimension $\codim_P(U)=:n$  and $\pi:P\to Y$ is a smooth morphism of 
relative dimension $\codim_P(U)$. We collect the following facts from \cite{BER}.

Let $f:X\to Y$ be a ci0 morphism. There exists an invertible sheaf on $X$, called  {\em the canonical dualizing sheaf}
and denoted by $\omega_{X/Y}$, which on any open subset $U\subset X$ on which $f$ factors as in \eqref{ci0}
is isomorphic to
\begin{equation}\label{equation-dualizing-sheaf1}
	(\omega_{X/Y})_{|U}\cong \bigwedge^n (\sI/\sI^2)^\vee\otimes_{\sO_X} i^*\Omega^n_{P/Y},
\end{equation}
where $\sI$ is the ideal sheaf of the regular closed  immersion $U\inj P$ \cite[A.2]{BER}.
If $g: Z\to X$ is another ci0 morphism, then there is a canonical isomorphism (see \cite[A.5]{BER})
$$
	\zeta'_{g,f}: \omega_{Z/Y}\xr{\simeq} \omega_{Z/X}\otimes_{\sO_Z}g^*\omega_{X/Y}.
$$
There is a canonical isomorphism 
$$
	\lambda_f: \omega_{X/Y}\xr{\simeq} f^!\sO_Y
$$
in $D(X)$ (see \cite[B1]{BER}), and a global section 
$$
	\delta_f\in H^0(X,\omega_{X/Y})
$$
with the following property. For any open subset $U\subset X$ on which $f$ factors as in \eqref{ci0} and such that the ideal sheaf $\sI$ of $i: U\inj P$ is generated by a regular sequence $t_1,\ldots, t_n$,
$\delta_f$ is mapped to the following element under the isomorphism \eqref{equation-dualizing-sheaf1}:
\begin{equation}\label{delta2}
	\delta_{f|U}= (\bar{t}_1^\vee\wedge\ldots \wedge\bar{t}_n^\vee )\otimes i^*(dt_n\wedge\ldots\wedge dt_1),
\end{equation}
where $(\bar{t}_j^\vee)_j$ is the dual of the basis  $(i^*(t_j))_j$ of $\sI/\sI^2$ \cite[A.7]{BER}.
(If $\sI$ is the zero ideal, this means that $\delta_{f|U}=1\in \sO_U$.)

\begin{definition}\label{defn-FundamentalClass-ci0}
Let $f: X\to Y$ be a ci0 morphism. Then we define the morphism 
\[c_f: \sO_X\to f^!\sO_Y \]
to be the composition $\lambda_f \circ \delta_f$.
\end{definition}

\begin{proposition}\label{prop-properties-FundCl-ci0}
Assume $f: X\to Y$ is a ci0 morphism.
\begin{enumerate}
\item Let $u: U\to X$ be an \'etale morphism. Then $f\circ u: U\to Y$ is ci0 and 
\[c_{f\circ u}= u^*c_f,\]
where $u^*$ is the morphism 
\[u^*: H^0(X, f^!\sO_Y)\to H^0(X, Ru_*u^*f^!\sO_Y)= H^0(U, (f\circ u)^!\sO_Y).\] 
\item Assume $g: Z\to X$ is another ci0 morphism. Then $c_{f\circ g}$ is equal to the composition
         \[\sO_Z\xr{c_g} g^!\sO_X\xr{g^!(c_f)} g^!f^!\sO_Y = (f\circ g)^!\sO_Y.\]
\item Assume $f$ is finite and flat. Then the composition 
\[f_*\sO_X\xr{f_*(c_f)} f_*f^!\sO_Y\xr{\Tr_f} \sO_Y\]
equals the classical trace morphism $\text{trace}_{X/Y}: f_*\sO_X\to \sO_Y$.
\item Suppose $X$ is integral. If $f$ is not dominant then $c_f=0$.
\end{enumerate}
\end{proposition}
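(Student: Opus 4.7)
\medskip

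My plan is to reduce each of (1)--(4) to a local calculation using the explicit formula \eqref{delta2} together with the functoriality properties of $\omega_{-/-}$, $\lambda_{-}$, and $\zeta'_{-,-}$ recorded in \cite{BER}.

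For (1), after choosing a local ci0 factorization $V\xr{i}P\xr{\pi}Y$ of $f$ with $\sI=(t_1,\dots,t_n)$, I would exploit that \'etale morphisms extend along closed immersions: there is an \'etale $P'\to P$ with $P'\times_P V=u^{-1}(V)$, yielding a ci0 factorization of $f\circ u$ whose defining regular sequence is the pullback of $t_1,\dots,t_n$. Equation \eqref{delta2} then gives $\delta_{f\circ u}=u^*\delta_f$ directly, and the corresponding statement for $\lambda$ is part of its construction in \cite[App.~B]{BER}. For (2), I would start from local factorizations $Z\xr{j}Q\xr{\rho}X$ and $V\xr{i}P\xr{\pi}Y$, arrange that $\rho(Q)\subset V$, and combine them into a factorization $Z\to W\to Y$ of $f\circ g$ by lifting $\rho$ to a smooth morphism $W\to P$ with $W\times_P V=Q$ (using the same \'etale-extension argument together with the \'etale-local $\mathbb{A}^m$-triviality of smooth maps). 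A regular sequence cutting out $Z$ in $W$ is then the concatenation of a lift of one for $Z\subset Q$ with the pullback of one for $V\subset P$, and inspection of \eqref{delta2} shows that $\delta_{f\circ g}$ corresponds to $\delta_g\otimes g^*\delta_f$ under $\zeta'_{g,f}$, whence (2) follows by the compatibility of $\lambda$ with composition.

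For (3), the map $\Tr_f\circ f_*(c_f)$ is $\sO_Y$-linear, so we may work locally and assume $f$ is finite free and admits a factorization $X\xr{i}P\xr{\pi}Y$ with $P=\mathbb{A}^n_Y$ and $\sI=(g_1,\dots,g_n)$. Under $\lambda_f$ combined with the Koszul resolution of $\sO_X$ as $\sO_P$-module, $\Tr_f(a\cdot\delta_f)$ is computed by the residue symbol $\operatorname{Res}\bigl[\tfrac{a\,dx_1\wedge\cdots\wedge dx_n}{g_1,\dots,g_n}\bigr]$, which is classically known to equal the trace of multiplication-by-$a$ on the finite free $\sO_Y$-algebra $f_*\sO_X$; this comparison is essentially the content of \cite[Thm~3.1]{BER}.

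For (4), I would work at the generic point $\eta\in X$ and show $\delta_{f,\eta}=0$. By \eqref{delta2} this reduces to $dt_1\wedge\cdots\wedge dt_n=0$ in $\bigwedge^n i^*\Omega^1_{P/Y}\otimes k(X)$, which by the conormal sequence is equivalent to $\Omega^1_{X/Y,\eta}=\Omega^1_{k(X)/k(y)}\neq 0$, where $y=f(\eta)$. The ci0 hypothesis forces $\dim_x X=\dim_{f(x)} Y$ at every $x$, hence $\dim X=\dim Y$ since $X$, $Y$ are integral and catenary; Nagata's dimension formula applied to the dominant morphism $X\to\overline{\{y\}}$ then gives $\mathrm{trdeg}(k(X)/k(y))=\dim\sO_{Y,y}>0$ whenever $y\neq\eta_Y$, so $\Omega^1_{k(X)/k(y)}\neq 0$. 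Hence $\delta_f$ vanishes at $\eta$ and, by local freeness of $\omega_{X/Y}$ on the integral scheme $X$, everywhere, so $c_f=0$. The main obstacle will be (3), as identifying the abstract $\Tr_f$ with the classical trace requires a careful residue calculation; this comparison is, however, essentially the content of \cite{BER}, which I expect to cite directly.
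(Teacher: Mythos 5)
Your treatment of (1)--(3) is essentially sound and, despite the extra unpacking, lands on the same pillars as the paper: (2) is exactly the content of \cite[Prop.~A.8(i), Prop.~B.2]{BER} (compatibility of $\delta$ with $\zeta'$ and of $\lambda$ with composition), and (3) is the identification of the composite with $\tau_f$ from \cite[(B.7.3), Thm.~3.1(iii)]{BER}; for (1) the paper takes a slightly slicker route, deducing it from (2) applied to $g=u$ together with the observation that $c_u$ is the identity under $u^!\sO_X\cong\sO_U$ (factorization $i=\mathrm{id}$, $\pi=u$ in \eqref{delta2}), which spares you the \'etale-lifting construction.

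In (4) your overall strategy is viable and genuinely different from the paper's, but the dimension-theoretic step is broken as written. The reduction is fine: $\delta_{f,\eta}$ is (up to sign) the determinant of the conormal map $(\sI/\sI^2)_\eta\otimes k(X)\to (i^*\Omega^1_{P/Y})_\eta\otimes k(X)$ between $n$-dimensional spaces, so it vanishes iff $\Omega^1_{k(X)/k(y)}\neq 0$, and since $k(X)/k(y)$ is finitely generated this holds whenever ${\rm trdeg}(k(X)/k(y))>0$. The problem is your justification of ${\rm trdeg}(k(X)/k(y))=\dim\sO_{Y,y}$. First, Nagata's dimension formula applied to $X\to\overline{\{y\}}$ \emph{at the generic point of} $X$ reads $0+{\rm trdeg}(k(X)/k(y))=0+{\rm trdeg}(k(X)/k(y))$ and gives nothing. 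Second, the global identities you lean on, $\dim X=\dim Y$ and (implicitly) $\dim\overline{\{y\}}=\dim Y-\dim\sO_{Y,y}$, fail for general regular excellent schemes: e.g.\ for $Y=\Spec\,k[x]_{(x)}[y]$ and $y$ the closed point cut out by $xy-1$ one has $\dim\overline{\{y\}}+\dim\sO_{Y,y}=1<2=\dim Y$; moreover $Y$ is not even assumed regular or integral in this proposition. The statement you need is purely local and follows from flatness of the smooth morphism $\pi$: $\dim\sO_{P,\eta}=\dim\sO_{Y,y}+\dim\sO_{P_y,\eta}$, together with $\dim\sO_{P,\eta}=n$ (the maximal ideal of $\sO_{P,\eta}$ is generated by the length-$n$ regular sequence cutting out $X$) and $\dim\sO_{P_y,\eta}=n-{\rm trdeg}(\kappa(\eta)/\kappa(y))$ (the fibre $P_y$ is of finite type over $\kappa(y)$ and regular of pure dimension $n$); this yields ${\rm trdeg}(k(X)/k(y))=\dim\sO_{Y,y}\geq 1$. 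The paper avoids $\Omega^1_{k(X)/k(y)}$ altogether: it shows $\sO_{P,\eta}$ (hence $\sO_{Y,y}$) is regular, builds a regular system of parameters of $\sO_{P,\eta}$ of the form $\pi^*(t_1),\dots,\pi^*(t_c),s_{c+1},\dots,s_n$ with the $t_j$ parameters of $\sO_{Y,y}$, and reads off $\delta_f=0$ from $d\pi^*(t_1)=0$ in $i^*\Omega^1_{P/Y}$; either repair is fine, but as it stands your argument for ${\rm trdeg}>0$ does not go through.
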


\begin{proof}
(2) follows from \cite[Prop.~A.8, (i) and Prop.~B.2]{BER}. Now  (1) follows from (2) and the fact that 
the composition
\[\sO_U\xr{c_u} u^!\sO_X\cong u^*\sO_X\cong \sO_U\]
is equal to the identity, which follows directly from \eqref{delta2}, taking the factorization $i=\id$ and $\pi=u$. 
The composition in (3) equals by its very definition the
morphism $\tau_f$ constructed in \cite[(B.7.3)]{BER} and hence the statement of (3) follows from
\cite[Thm.~3.1, (iii)]{BER}.

Finally for (4), we can assume that $f$ factors as $f=\pi\circ i$ with $i:X\inj P$ a regular closed immersion of codimension
$n$ and $\pi: P\to Y$ a smooth morphism of relative dimension $n$. Let $x\in P$ be the generic point of $X$, and
set $y:=\pi(x)\in Y$. Since $X$ is regular at $x$ so is $P$, and thus $Y$ is regular at $y$.
By assumption,  $c:=\dim \sO_{Y,y}\ge 1$. Let $t_1,\ldots, t_c$ be a regular system of parameters,
which generates the maximal ideal in $\sO_{Y,y}$. Further 
$\sO_{P,x}\otimes_{\sO_{Y,y}}\kappa(y)=\sO_{\pi^{-1}(y),x}$ is a regular local ring of dimension  
$n-c$, hence we find elements $s_{c+1},\ldots, s_n$ in the maximal ideal $\frak{m}_x$ of $\sO_{P,x}$
lifting a regular sequence of parameters of $\sO_{P,x}\otimes_{\sO_{Y,y}}\kappa(y)$.
We see that the sequence 
\[\pi^*(t_1),\ldots,\pi^*(t_c), s_{c+1},\ldots s_n\]
generates $\frak{m}_x$
and is thus a regular sequence of parameters for the regular and $n$-dimensional ring $\sO_{P,x}$.
Therefore after shrinking $X$ and $Y$ further we may assume that the ideal sheaf $\sI$ of $i$ is
generated by a sequence as above. Since $d\pi^*(t_1)=0$ in $i^*\Omega^1_{P/Y}$,
it follows immediately from the description of $\delta_f$ in \eqref{delta2} that $c_f$ vanishes.
\end{proof}

\begin{proposition}\label{proposition-vanishing-coh-of-support-dualizing-complex}
Let $Y$ be a regular scheme and  $f:X\xr{} Y$  a morphism of finite type with the following property: 
\eq{*}{ \dim\sO_{Y, f(\eta)}={\rm trdeg}(k(\eta)/k(f(\eta))),\quad \text{for any generic point } \eta\in X.}
For every closed subscheme $Z\subset X$ of codimension $\geq c$ we have
	$$
	\mathcal{H}^i(R\underline{\Gamma}_Zf^!\OO_Y)=0, \quad \text{for all $i<c$.}
	$$
 In particular,  $f^!\sO_Y=\tau_{\ge 0} f^!\sO_Y$ and the restriction morphism
$\Hom(\OO_X,f^!\OO_Y)\xr{} \Hom(\OO_U,f_{\mid U}^!\OO_Y)$ is injective 
  for all dense open subsets $U\subset X$, and is an isomorphism if $\codim_X(X\backslash U)\geq 2$.
\end{proposition}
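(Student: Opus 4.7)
The plan is to identify $f^!\sO_Y$ as a dualizing complex on $X$ whose codimension function coincides with the geometric codimension, and then to invoke the standard local cohomology vanishing for dualizing complexes.

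Since $Y$ is regular and admits a dualizing complex, $\sO_Y$ is itself a dualizing complex on $Y$ (up to a locally constant shift on each connected component), with codimension function $d_Y(y)=\dim\sO_{Y,y}$. As $f$ is of finite type and $f^!$ preserves dualizing complexes (cf.\ \cite[V, VI]{Ha}, \cite[Ch.~3]{Co}), $f^!\sO_Y$ is a dualizing complex on $X$, and its codimension function obeys the transformation rule
\[d_X(x)=d_Y(f(x))-\mathrm{trdeg}(k(x)/k(f(x))).\]
The hypothesis $(*)$ forces $d_X(\eta)=0$ at every generic point $\eta\in X$. Since $X$ is catenary (it admits a dualizing complex) and $d_X$ increases by $1$ under every immediate specialization $x\rightsquigarrow x'$, this normalization identifies $d_X(x)$ with $\codim_X(\overline{\{x\}})$ for all $x\in X$.

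I would then invoke the standard vanishing for dualizing complexes: if $\omega^\bullet$ is a dualizing complex on a noetherian scheme with codimension function $d$, its Cousin (minimal injective) resolution has $i$-th term $\bigoplus_{d(x)=i} J(x)$, a direct sum of injective hulls at points with $d(x)=i$; since $\underline{\Gamma}_Z$ kills $J(x)$ for $x\notin Z$, the complex $R\underline{\Gamma}_Z\omega^\bullet$ is concentrated in degrees $\geq\min_{z\in Z} d(z)$. In our setting this minimum equals $\codim_X Z\geq c$, yielding $\mathcal{H}^i(R\underline{\Gamma}_Z f^!\sO_Y)=0$ for $i<c$.

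The three ``in particular'' assertions follow formally. Taking $Z=X$, whose minimum of $d_X$ is $0$, gives $f^!\sO_Y\in D^{\geq 0}$, i.e.\ $f^!\sO_Y=\tau_{\geq 0}f^!\sO_Y$. For a dense open $j\colon U\inj X$ with closed complement $Z$, we have $\codim_X Z\geq 1$, so $R\underline{\Gamma}_Z f^!\sO_Y\in D^{\geq 1}$; the localization triangle
\[R\underline{\Gamma}_Z f^!\sO_Y\lra f^!\sO_Y\lra Rj_*(f^!\sO_Y)_{|U}\xr{+1}\]
together with the long exact sequence for $\Hom(\sO_X,-)=R\Gamma(X,-)$ yields injectivity of the restriction, and if $\codim_X Z\geq 2$ the same triangle with $R\underline{\Gamma}_Z f^!\sO_Y\in D^{\geq 2}$ upgrades this to bijectivity. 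The main delicate point is the identification $d_X(x)=\codim_X(\overline{\{x\}})$: it hinges on the regularity of $Y$ (which makes $\sO_Y$ itself dualizing), the hypothesis $(*)$ (which normalizes $d_X$ at the generic points), and the catenarity of $X$ (which propagates the normalization to every point).
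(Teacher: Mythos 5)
Your argument is correct and follows essentially the same route as the paper: both identify $f^!\sO_Y$ as a dualizing complex on $X$ whose codimension function, normalized by hypothesis $(*)$ via the transformation rule under $f^!$ (Conrad (3.2.4), (3.3.36)), coincides with the geometric codimension, and then read off the local cohomology vanishing from the Cousin (CM) structure. You merely spell out the details the paper delegates to \cite{Ha} and \cite{Co}, including the derivation of the ``in particular'' statements from the localization triangle.
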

\begin{proof}
Since $f^!\OO_Y$ is a dualizing complex, it is a CM complex for a shifted codimension filtration (see \cite[IV, \S3]{Ha}). 
By assumption \eqref{*}, \cite[(3.2.4), p.~129]{Co} and \cite[(3.3.36), p.~145]{Co} this shift is 0.
	\end{proof}

\begin{remark}\label{rmk-cond*}
\begin{enumerate}
\item Let $f: X\to Y$ be a morphism of finite type between irreducible schemes, which is dominant
and generically finite. Then $f$ satisfies condition \eqref{*}.
\item If $f: X\to Y$ is a finite-type morphism between regular schemes that satisfies condition \eqref{*},
then it is a ci0 morphism. (It is clear that $f$ is a complete intersection morphism and it follows from 
\cite[Prop. (5.6.4)]{EGAIV2} that its virtual relative dimension is 0.) 
\end{enumerate}
\end{remark}

\begin{proposition}\label{proposition-existence-fund-class}
	Let $f:X\xr{} Y$ be a finite type morphism between integral and excellent schemes satisfying
	condition \eqref{*}. Assume that $Y$ is regular. There is a unique morphism in $D(X)$, 
	$$
	c_f:\OO_X\xr{} f^!\OO_Y,
	$$
	such that the restriction to the open subset of regular points $X_{{\rm reg}}$ is the class from 
     Definition \ref{defn-FundamentalClass-ci0} for the ci0 morphism $f_{|X_{\rm reg}}$.
   Furthermore, $c_f$ satisfies the analog of  the properties (1)-(4) of 
     Proposition \ref{prop-properties-FundCl-ci0}.
        
	\begin{proof}
		Uniqueness follows from Proposition \ref{proposition-vanishing-coh-of-support-dualizing-complex}. 
For the construction, let $\nu:\tilde{X}\xr{} X$ be the normalization. We may construct $c_{f\circ \nu}$ 
by restricting to the regular locus and apply Proposition \ref{proposition-vanishing-coh-of-support-dualizing-complex}. 
We set 
\eq{proposition-existence-fund-class1}{
		c_f=\left[\OO_X\xr{\nu^*} \nu_*\OO_{\tilde{X}} \xr{\nu_*(c_{f\circ \nu})} \nu_*(f\circ \nu)^!\OO_Y 
       = \nu_*\nu^! f^!\OO_Y \xr{{\rm Tr}_{\nu}} f^!\OO_Y\right]. }
The second statement follows from Proposition \ref{proposition-vanishing-coh-of-support-dualizing-complex}.
\end{proof}
\end{proposition}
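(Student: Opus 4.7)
The plan has two parts: first, uniqueness is an immediate consequence of the injectivity in Proposition~\ref{proposition-vanishing-coh-of-support-dualizing-complex}; second, existence is obtained by passing to the normalization, on which the singular locus has codimension at least $2$, so that the ci0 class on the regular locus extends automatically and can then be pushed down to $X$.

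For uniqueness, note that $X_{\rm reg}\subset X$ is open (since $X$ is excellent) and nonempty, and hence dense because $X$ is integral. Condition~\eqref{*} is inherited by $f_{|X_{\rm reg}}$, and by Remark~\ref{rmk-cond*}~(2) the restricted morphism is ci0, so Definition~\ref{defn-FundamentalClass-ci0} indeed produces a candidate class on $X_{\rm reg}$. Proposition~\ref{proposition-vanishing-coh-of-support-dualizing-complex} gives the injectivity of the restriction
\[
\Hom_{D(X)}(\OO_X,f^!\OO_Y)\lra \Hom_{D(X_{\rm reg})}(\OO_{X_{\rm reg}},f_{|X_{\rm reg}}^!\OO_Y),
\]
and a morphism $c_f$ with the prescribed restriction is therefore unique.

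For existence, let $\nu\colon\tilde X\to X$ be the normalization. Because $X$ is integral and excellent, $\nu$ is finite, so $f\circ\nu$ is of finite type. Since $\nu$ is birational, $f\circ\nu$ still satisfies~\eqref{*}. The scheme $\tilde X$ is normal and excellent, so $\tilde X\setminus\tilde X_{\rm reg}$ has codimension $\geq 2$ in $\tilde X$. By Remark~\ref{rmk-cond*}~(2), $(f\circ\nu)_{|\tilde X_{\rm reg}}$ is a ci0 morphism, so Definition~\ref{defn-FundamentalClass-ci0} provides a class $c_{(f\circ\nu)_{|\tilde X_{\rm reg}}}$. By the last assertion of Proposition~\ref{proposition-vanishing-coh-of-support-dualizing-complex} applied to $f\circ\nu$, this extends uniquely to a morphism $c_{f\circ\nu}\colon\OO_{\tilde X}\to(f\circ\nu)^!\OO_Y$. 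I then define $c_f$ by the composition
\[
\OO_X\xr{\nu^*}\nu_*\OO_{\tilde X}\xr{\nu_*(c_{f\circ\nu})}\nu_*(f\circ\nu)^!\OO_Y=\nu_*\nu^!f^!\OO_Y\xr{\Tr_\nu}f^!\OO_Y.
\]
Over $X_{\rm reg}$ the map $\nu$ is an isomorphism (regular implies normal), so $\nu^*$ and $\Tr_\nu$ restrict to mutually inverse isomorphisms, and the composition visibly restricts to the ci0 class of $f_{|X_{\rm reg}}$ as required.

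To obtain the analogues of properties (1)--(4), the strategy is to reduce each identity to its ci0 counterpart in Proposition~\ref{prop-properties-FundCl-ci0} by restricting to the regular locus of the source and invoking the injectivity of Proposition~\ref{proposition-vanishing-coh-of-support-dualizing-complex}; properties (1), (2), and (4) then follow directly. The delicate point I expect is property~(3): the asserted equality with the classical trace lives in a $\Hom$ group on $Y$ rather than on $X$, so one first rephrases it via the adjunction~\eqref{equation-adj} for the finite morphism $f$ as an identity in $\Hom_{D(X)}(\OO_X,f^!\OO_Y)$, and then chains the transitivity of $c_{?}$ along $\nu$ and $f\circ\nu$ together with the multiplicativity of the classical trace under composition in order to descend from $\tilde X$ to $X$.
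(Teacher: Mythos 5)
Your argument is essentially identical to the paper's: uniqueness from the injectivity statement in Proposition \ref{proposition-vanishing-coh-of-support-dualizing-complex}, and existence by extending the ci0 class over the regular locus of the normalization (codimension $\geq 2$ complement) and pushing down via $\Tr_\nu\circ\nu_*(-)\circ\nu^*$, which is exactly formula \eqref{proposition-existence-fund-class1}. One minor remark on the analogue of property (3): since $\nu$ and $f\circ\nu$ need not be flat, chaining ``classical traces'' through the normalization is awkward, and it is cleaner to transport both sides to $\Hom_{D(X)}(\OO_X,f^!\OO_Y)$ by adjunction and restrict to $f^{-1}(U)$ for a dense open $U\subset Y$ with $f^{-1}(U)$ regular, where Proposition \ref{prop-properties-FundCl-ci0}~(3) applies directly.
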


\begin{lemma}\label{lemma-push-forward}
	Assumptions as in Proposition \ref{proposition-existence-fund-class}. Let $g:Z\xr{} X$ be a proper morphism 
between integral schemes. Assume that $f\circ g: Z\to Y$ satisfies condition \eqref{*}. 
     	Then 
	$$
	\deg(Z/X)\cdot c_f = \left[\OO_X\xr{g^*} Rg_*\OO_Z\xr{Rg_*(c_{f\circ g})} Rg_*g^!f^!\OO_Y 
                                                         \xr{{\rm Tr}_g} f^!\OO_Y \right].
	$$
	\begin{proof}
We may assume that $f$ and $g$ are dominant and hence generically finite, because both sides vanish otherwise.
In view of Proposition \ref{proposition-vanishing-coh-of-support-dualizing-complex} it suffices to prove the assertion
after restricting to a dense open subset of $X$. We can therefore assume that $X, Z$ are regular and $g$ is finite and flat. Then the statement follows from Proposition \ref{prop-properties-FundCl-ci0}, (2),  (3). 
	\end{proof}
\end{lemma}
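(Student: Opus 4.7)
The plan is to reduce in two simple steps to the case where $g$ is a finite flat morphism between regular integral schemes, and then combine properties (2) and (3) of Proposition \ref{prop-properties-FundCl-ci0} with the classical identity that the trace of a finite flat morphism of constant degree $n$, composed with pullback, equals multiplication by $n$. First I would dispose of the case where $g$ is not dominant: then $\deg(Z/X)=0$ by convention, and since $f\circ g$ cannot be dominant either, the analog of Proposition \ref{prop-properties-FundCl-ci0}(4) provided by Proposition \ref{proposition-existence-fund-class} gives $c_{f\circ g}=0$, so both sides vanish. Hence I may assume $g$, and therefore $f\circ g$, is dominant and generically finite.

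Next I would use the injectivity statement in Proposition \ref{proposition-vanishing-coh-of-support-dualizing-complex}: the restriction map $\Hom_{D(X)}(\OO_X, f^!\OO_Y)\to \Hom_{D(U)}(\OO_U, f^!|_U\OO_Y)$ is injective for every dense open $U\subset X$, so it suffices to check the identity after replacing $X$ by any convenient dense open subset. Combining generic flatness of $g$, the density of the regular loci of $X$ and $Z$, and the fact that a proper generically finite morphism becomes finite over a dense open of the target, I can shrink $X$ so that $X$ and $Z$ are regular, $g$ is finite and flat of constant degree $\deg(Z/X)$, and in particular $f$, $g$, and $f\circ g$ are all ci0 morphisms to which Proposition \ref{prop-properties-FundCl-ci0} applies directly.

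With these reductions in hand, property (2) gives $c_{f\circ g} = g^!(c_f)\circ c_g$ in $D(Z)$. Substituting this into the right-hand side and using the naturality of $\Tr_g:Rg_*g^!\to \id$ applied to the morphism $c_f$ (so that $\Tr_g\circ Rg_*g^!(c_f) = c_f\circ \Tr_g$), the right-hand side rewrites as
\[
\left[\OO_X\xr{g^*} g_*\OO_Z \xr{g_*(c_g)} g_*g^!\OO_X \xr{\Tr_g} \OO_X \xr{c_f} f^!\OO_Y\right].
\]
By property (3), the composition $\Tr_g\circ g_*(c_g)$ equals the classical trace $\text{trace}_{Z/X}$; and for a finite flat morphism of integral schemes of constant degree $n$, the composite $\OO_X\xr{g^*} g_*\OO_Z \xr{\text{trace}_{Z/X}} \OO_X$ is multiplication by $n$, since multiplication by $1$ on a rank-$n$ locally free module has trace $n$. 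Thus the right-hand side equals $\deg(Z/X)\cdot c_f$, as required.

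The main obstacle lies in the second reduction: one must verify that a single shrinking of $X$ can simultaneously achieve regularity of $X$ and $Z$ together with finiteness and flatness of $g$, so that property (2) of Proposition \ref{prop-properties-FundCl-ci0} is legitimately applicable and the problem is genuinely transported to the ci0 setting. Once this is in place, the remainder of the argument is a purely formal diagram chase built from naturality of $\Tr_g$ and the trace-degree identity for finite flat maps.
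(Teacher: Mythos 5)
Your proof is correct and follows essentially the same route as the paper's: dispose of the degenerate cases via property (4), shrink $X$ using the injectivity statement of Proposition \ref{proposition-vanishing-coh-of-support-dualizing-complex} until $X$, $Z$ are regular and $g$ is finite and flat, and conclude from Proposition \ref{prop-properties-FundCl-ci0}, (2) and (3), together with the trace--degree identity. The only slip is your claim that $g$ dominant forces $f\circ g$ dominant: you must also dispose separately of the case where $f$ is not dominant (there both sides again vanish, by property (4)), which is why the paper reduces to ``$f$ \emph{and} $g$ dominant.''
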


\section{The twisted base change map}\label{xi}
\begin{definition}\label{defn-ad-square}
 Let $\sigma$ be a commutative diagram 
\begin{equation}\label{equation-cartesian-diagram}
\xymatrix{
	X \ar[d]_{f} 
	&
	A \ar[l]_{g_1}\ar[d]^{f_1}\ar@{}[dl]|-{\sigma}
	\\
	Y
	&
	Z\ar[l]^{g}	
}
\end{equation}
of finite type morphisms. We say $\sigma$ is an {\em admissible square} if $g$ and $g_1$ are proper. 
We define the natural transformation of functors $D^b_{\rm c}(Z)\to D^+_{\rm c}(X)$
\[\xi_\sigma: Rg_{1*} f_1^!\to f^!Rg_*\]
to be the composition
\[\xi_\sigma: R g_{1*} f_1^!\xr{{\rm Tr}^{\vee}_g} R g_{1*}f^!_1 g^!Rg_* \xr{=} R g_{1*} g^!_1f^!R g_* 
                  \xr{{\rm Tr}_{g_1}} f^!Rg_*.\]
\end{definition}

\begin{lemma}\label{lem-xi}
Let $\sigma$ be an admissible square as in \eqref{equation-cartesian-diagram}.
\begin{enumerate}
\item Let 
\[\xymatrix{ X\ar[d]_f & A\ar[l]_{g_1}\ar[d]|-{f_1}\ar@{}[dl]|-{\sigma} & 
                                                                               B\ar[l]_{h_1}\ar[d]^{f_2}\ar@{} [dl]|-{\sigma_1}\\
                    Y            & Z \ar[l]^g & V\ar[l]^h }\]
be two admissible squares and denote by $\sigma_2$ their composition. We have 
\[\xi_{\sigma_2}(F)= \xi_\sigma(Rh_*(F))\circ Rg_{1*}(\xi_{\sigma_1}(F)),\quad F\in D^b_{\rm c}(V).\]
\item Let 
\[\xymatrix{ W\ar[d]_e & B\ar[l]_{g_2}\ar[d]^{e_1}\ar@{}[dl]|-{\sigma_1}\\
                    X\ar[d]_f & A\ar[l]|-{g_1}\ar[d]^{f_1}\ar@{}[dl]|-{\sigma}\\
                    Y            & Z\ar[l]^g       }\]
    be two admissible squares and denote by $\sigma_2$ their composition. If $G\in D^b_{c}(Z)$ and $f_1^!G, f_1^!g^!Rg_*G\in D^b(A)$, then we have 
\[\xi_{\sigma_2}(G)= e^!(\xi_{\sigma}(G))\circ \xi_{\sigma_1}(f_1^!(G)).\]
\item  Assume the morphisms $f, f_1$ are proper. If $G\in D^b_{c}(Z)$ and $f_1^!G\in D^b(A)$, then $\xi_\sigma$ equals the composition
	\begin{equation}\label{equation-with-f}
		R g_{1*}f_1^!(G)\xr{{\rm Tr}^{\vee}_f} f^!Rf_*R g_{1*} f_1^!(G) \xr{=} f^!Rg_*R f_{1*}f_1^!(G)  \xr{f^!Rg_*({\rm Tr}_{f_1})} f^!Rg_*(G).
	\end{equation}
\item Assume $\sigma$ is cartesian and $f$ is \'etale. Then $\xi_\sigma$ equals the base change isomorphism $R g_{1*}f_1^*\xr{\simeq} f^* R g_*$.
\item Assume the morphism $f, f_1$ are closed immersions. Then 
     $f_*(\xi_\sigma)$ is the morphism
    \[Rg_* R\sHom_Z(f_{1*}\sO_A, -)\to Rg_*R\sHom(Lg^*f_*\sO_X, -)\cong R\sHom_Y(f_*\sO_X, Rg_*(-)),\]
    where the first map is induced by the natural map  $Lg^*f_*\sO_X\to f_{1*}\sO_A$.
\item Assume that the square $\sigma$ is tor-independent. If $G\in D^b_c(Z)$ satisfies $g^!Rg_*(G)\in D^b(Z)$ then $\xi_\sigma(G)$ is an isomorphism.
\item Suppose $j:U\xr{} Y$ is an open immersion. Denote by $\sigma_U$ the commutative square 
$$
\xymatrix{
 	f^{-1}(U) \ar[d]_{\bar{f}} 
	&
	(f\circ g_1)^{-1}(U) \ar[l]_{\bar{g}_1}\ar[d]^{\bar{f}_1}\ar@{}[dl]|-{\sigma_U}
	\\
	U
	&
	g^{-1}(U) \ar[l]^{\bar{g}}	
}
$$
induced by $\sigma$.
Then 
$
\xi_{\sigma_U}=j^*\xi_{\sigma},
$
where we use the natural transformations $j^*Rg_{1*}f_1^!=R\bar{g}_{1*}\bar{f}_1^!j^*$ and $j^*f^!Rg_{*}=\bar{f}^!R\bar{g}_{*}j^*$.
\end{enumerate}
\end{lemma}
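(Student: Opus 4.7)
The unifying strategy is to unfold $\xi_\sigma = \Tr_{g_1}\circ Rg_{1*}(f_1^!\Tr^\vee_g)$ (with the pseudofunctoriality identification $g_1^!f^! \cong f_1^! g^!$ absorbed in the middle equality) and to chase diagrams using three standard tools: (a) compatibility of the trace $\Tr$ and its adjoint $\Tr^\vee$ with composition of proper morphisms under $(pq)^! \cong q^!p^!$; (b) the adjunction identities \eqref{adj1} and \eqref{adj2}; (c) for part (6), the classical tor-independent base change theorem for $f^!$, e.g.\ \cite[Thm.~4.4.1]{Co}. Every assertion then reduces to purely formal bookkeeping of units and counits of the $(Rg_*, g^!)$-adjunctions.

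For parts (1) and (2), the plan is to substitute the composition formulas
$$\Tr^\vee_{gh} = h^!(\Tr^\vee_g)\circ \Tr^\vee_h, \qquad \Tr_{g_1 h_1} = \Tr_{g_1}\circ Rg_{1*}(\Tr_{h_1}),$$
and their vertical analogues, into the defining composite of $\xi_{\sigma_2}$. After splitting each composite trace, the middle factor assembles into $\xi_\sigma$ while the outer factor matches the appropriately push- or pull-transported $\xi_{\sigma_1}$; the boundedness hypothesis in (2) is needed to keep all intermediate objects in $D^+_{\rm c}$ so the adjunctions apply. Part (7) is immediate, since $\Tr$ and $\Tr^\vee$ commute with restriction to open subsets of the target, so applying $j^*$ to the diagram defining $\xi_\sigma$ produces the diagram defining $\xi_{\sigma_U}$.

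For (3), I would exploit the symmetry $f\circ g_1 = g\circ f_1$, which under pseudofunctoriality identifies $\Tr^\vee_{fg_1}$ with $\Tr^\vee_{gf_1}$ and $\Tr_{fg_1}$ with $\Tr_{gf_1}$; inserting $\Tr^\vee_f$ and using the identity $\id = f^!\Tr_f\circ \Tr^\vee_f$ from \eqref{adj2} converts the composite \eqref{equation-with-f} into the defining composite of $\xi_\sigma$. For (4), an étale $f$ carries a canonical identification $f^! = f^*$ (and similarly $f_1^! = f_1^*$) under which $\Tr_g$ and $\Tr^\vee_g$ correspond via flat base change to the unit and counit of $(f^*, Rf_*)$; the two traces then collapse to the base-change isomorphism $Rg_{1*}f_1^* \xr{\simeq} f^*Rg_*$. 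For (5), since $f$ and $f_1$ are closed immersions one has $f_*f^!(-) = R\sHom_Y(f_*\sO_X, -)$ and $Rg_{1*}f_{1*}f_1^!(-) = Rg_*R\sHom_Z(f_{1*}\sO_A,-)$; under these descriptions $\Tr^\vee_g$ is induced by the natural map $Lg^*f_*\sO_X \to f_{1*}\sO_A$ via the $(Lg^*, Rg_*)$-adjunction applied to $R\sHom$, which is precisely the stated factorization.

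Part (6) is the hard one and where most of the work lives. Tor-independence of $\sigma$ implies that the exchange transformation $Lg^*f^!(-) \to f_1^! Lg^*(-)$ is an isomorphism on $D^b_{\rm c}(Y)$; passing $G$ through $g^!\circ Rg_*$, which by assumption lies in $D^b$ so that unit and counit identities stay in the bounded derived category, transports this invertibility into the invertibility of $\xi_\sigma(G)$. The main obstacle is to verify that our $\xi_\sigma$, constructed from $\Tr^\vee_g$ and $\Tr_{g_1}$, literally coincides with the exchange transformation of Verdier/Conrad; this requires unfolding the construction of $f^!$ in \cite{Co} through dualizing complexes and $R\sHom$, and checking compatibility of the trace with base change on a suitable local factorization of $g$ (typically as a closed immersion followed by a projection). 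Once that identification is in place, tor-independence together with the boundedness hypothesis forces $\xi_\sigma(G)$ to be an isomorphism.
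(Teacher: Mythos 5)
Parts (1)--(5) and (7) of your proposal follow essentially the same route as the paper: each is a formal manipulation of $\Tr_g$, $\Tr^\vee_g$ and the adjunction identities \eqref{adj1}, \eqref{adj2}, together with the standard compatibilities of the trace with composition and with flat base change (\cite[Lem.~3.4.3 (TRA1), (TRA4)]{Co}). Your treatments of (3), (4) and (5) match the paper's in substance, and the boundedness hypotheses play the role you assign them.

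The gap is in (6). You propose to quote a tor-independent base change theorem asserting that an exchange map $Lg^*f^!\to f_1^!Lg^*$ is an isomorphism and then to ``transport'' this to $\xi_\sigma(G)$. There are two problems. First, no off-the-shelf theorem applies in this generality: $g$ is projective, hence typically not flat, and $f$ is only of finite type, not proper, so neither Conrad's residually-stable base change nor the $(Rf_*,f^!)$-adjunction hands you that exchange isomorphism; establishing it is essentially equivalent to proving (6). Second, even granting it, the passage to the invertibility of $\xi_\sigma(G)$ is not a formality: $\xi_\sigma$ is built from the unit of $(Rg_*,g^!)$ and the counit of $(Rg_{1*},g_1^!)$, not from $Lg^*$, and you explicitly defer the identification of the two maps as ``the main obstacle'' without resolving it. The paper's actual proof of (6) carries out the work you defer, and it factors $f$ (not $g$, as you write) locally as a closed immersion into an open subscheme of $\P^n_Y$ followed by the projection; this splits $\sigma$ into three cartesian admissible squares, which are composed using (2) (whence the boundedness hypotheses). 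The closed-immersion square is handled by (5), since tor-independence makes $Lg^*f_*\sO_X\to f_{1*}\sO_A$ an isomorphism; the open-immersion square by (4); and the $\P^n$-projection by writing down an explicit candidate isomorphism built from $\omega_\pi[n]\otimes\pi^*(-)$ and matching it against $\xi$ via (3) and the definition of the projective trace. To complete your proof of (6) you would need to carry out precisely this factorization argument rather than cite it away.
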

\begin{proof}
Claim (1) follows directly from \cite[Lem 3.4.3 (TRA1)]{Co}. Claim (2) follows from the definition and \eqref{adj2} by a straightforward computation. 

Assume $f$ and $f_1$ are proper. 
In order to prove Claim (3), note that \eqref{equation-with-f} is adjoint to $Rg_*({\rm Tr}_{f_1}(G))$. It follows easily from \cite[Lem 3.4.3 (TRA1)]{Co} and \eqref{adj1} that $\xi_{\sigma}(G)$ has the same property.  

Claim (4) and (7) follow from \cite[Lem 3.4.3 (TRA4)]{Co} and \eqref{adj1}.
Claim (5) follows in the same way as (3) by using the fact that $\Tr_{f_1}$ on the left-hand side of the composition in (5)
is given by precomposition with $\sO_Z\to f_{1*}\sO_A$ and $\Tr_f$ on the right-hand side is given
by precomposition with $\sO_Y\to f_*\sO_X$.

Let us prove (6). The question is local on $X$, we can thus assume that $f$ factors as 
$X\xr{i}U\xr{j}P:=\P^n_Y\xr{\pi}Y$, where $i$ is a closed immersion, $j$ is an open immersion, and
$\pi$ is the projection. We can factor $\sigma$ into three admissible cartesian squares:
\[\xymatrix{X\ar[d]_{i}  & A\ar[d]^{i_A}\ar[l]_{g_1}\ar@{}[dl]|-{\sigma_i}\\
                 U\ar[d]_{j}  & U_Z\ar[d]^{j_A}\ar[l]|-{g_U}\ar@{}[dl]|-{\sigma_j}\\
                  P\ar[d]_{\pi} & P_Z\ar[l]|-{\bar{g}_1}\ar[d]^{\pi_1}\ar@{}[dl]|-{\bar{\sigma}}\\
                Y & Z\ar[l]^{g}.
}\]
The square $\sigma_i$ is tor-independent hence $\xi_{\sigma_i}$ is an isomorphism by (5). It follows from (4) that $\xi_{\sigma_j}$ is an isomorphism. Since $\pi_1$ and $\pi_1\circ j_A$ are smooth (hence bounded complexes are mapped to bounded complexes via $(\cdot)^!$), we may use (2) and prove that $\xi_{\bar{\sigma}}(G)$ is an isomorphism.
Set $\omega_{\pi}:=\Omega^n_{P/Y}$, we define the isomorphism $\xi_{\bar{\sigma}}': R\bar{g}_{1*} \pi_1^!\to \pi^!R g_*$ as the composition
\mlnl{
R\bar{g}_{1*} \pi_1^! \cong R\bar{g}_{1*}(\omega_{\pi_1}[n]\otimes \pi_1^*(-) ) 
                                    \cong R\bar{g}_{1*}(\bar{g}_1^*\omega_{\pi}[n]\otimes \pi_1^*(-) )\\
                                      \cong \omega_{\pi}[n]\otimes R\bar{g}_{1*}\pi_1^*
                                      \cong \omega_{\pi}[n]\otimes \pi^*R g_* \cong \pi^!R g_*.
}
It suffices to show $\xi_{\bar{\sigma}}'=\xi_{\bar{\sigma}}$, which by (3) is equivalent to 
\[\Tr_{\pi}(Rg_*G)\circ R\pi_*(\xi_{\bar{\sigma}}'(G))= Rg_*(\Tr_{\pi_1}(G)),\quad G\in D^b_c(Z).\]
This follows directly from the definition of the projective trace (see \cite[2.3]{Co}) and \cite[(2.4.1)]{Co}.
\end{proof}

\section{Reductions}\label{reductions}
\subsection{Setup}
We consider the diagram 
$$
\xymatrix
{
	V \ar[d]_f
	&
	\\
	Y
	&
	Z,\ar[l]^{g}
}
$$
from Theorem \ref{thm2}. Recall that:
\begin{enumerate}
	\item $V,Y,Z$ are integral noetherian excellent schemes, $Y,Z$ are regular.
	\item $f$ is of finite type, dominant and generically finite 
              and the base change $V\times_Y Z\xr{} Z$ is generically finite.
	\item  $g$ is projective.
\end{enumerate}
For an irreducible component $A$ of $V\times_Y Z$, we denote by $\ell_A$ the multiplicity of $A$ in the generic fiber over
$Z$; if $A$ does not dominate $Z$ then $\ell_A=0$. We denote by $g_A$ and $f_A$ the composition of the
closed immersion $A\inj V\times_Y Z$ with the projection to $V$ and $Z$, respectively, and
by $\sigma_A$ the corresponding admissible square. 

\begin{notation}\label{not-equation}
With the above notations and assumptions, we say {\em $E(V\xr{} Y\xl{} Z)$ holds} if the following
equality holds in $\Hom_{D(V)}(\OO_V,f^!Rg_*\OO_Z)$:  
\begin{multline*}
	[\OO_V\xr{c_f} f^!\OO_Y \xr{f^!(g^*)} f^!Rg_*\OO_Z]= \\ 
  \sum_{A} \ell_A \cdot [\OO_V\xr{g_A^*} R g_{A*}\OO_A  \xr{R g_{A*}(c_{f_A})} R g_{A*}f^!_A\OO_Z 
       \xr{\xi_{\sigma_A}}  f^!Rg_*\OO_Z ],
\end{multline*}
where the sum runs over all irreducible components of $A$ with $\ell_A\neq 0$. 
(Notice that by condition (1) and (2) above $c_f$ and $c_{f_A}$ are defined.)
\end{notation}

\begin{lemma}\label{lem-finite-flat-case}
If $f$ is finite and flat, then $E(V\xr{f} Y\xl{g} Z)$ holds. 
\end{lemma}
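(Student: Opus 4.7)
The plan is to reduce the identity $E$ to a classical trace identity for finite flat morphisms, and then verify it by restriction to the generic fiber over $Z$. Since $f$ is finite, the trace $\Tr_f$ provides an adjunction isomorphism $\Hom_{D(V)}(\OO_V,f^!Rg_*\OO_Z)\cong\Hom_{D(Y)}(f_*\OO_V,Rg_*\OO_Z)$, so I would verify $E(V\to Y\leftarrow Z)$ by comparing adjoints. Naturality of $\Tr_f$ together with Proposition \ref{prop-properties-FundCl-ci0}(3) identifies the adjoint of the left-hand side with the composition $f_*\OO_V\xr{\text{trace}_{V/Y}}\OO_Y\xr{g^*}Rg_*\OO_Z$.

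For the right-hand side, set $W:=V\times_Y Z$ and let $i_A\colon A\inj W$ be the closed immersion of each component, so that $g_A=g_W\circ i_A$ and $f_A=f_W\circ i_A$. Then $\sigma_A$ is the horizontal composition, in the sense of Lemma \ref{lem-xi}(1), of the square $\sigma_W$ (top arrow $g_W$, bottom arrow $g$) with the admissible square $\tau_A$ (top arrow $i_A$, bottom arrow $\id_Z$). Using Lemma \ref{lem-xi}(1) and (3), together with the factorization $g_A^*=Rg_{W*}(i_A^*)\circ g_W^*$, the adjoint of each right-hand side summand simplifies to $Rg_*\bigl(\Tr_{f_A}\circ f_{A*}(c_{f_A})\circ \pi_A\bigr)\circ f_*(g_W^*)$, where $\pi_A\colon f_{W*}\OO_W\to f_{A*}\OO_A$ is the natural surjection. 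Since $f$ is flat, the square $\sigma_W$ is tor-independent and classical flat base change yields an isomorphism $g^*f_*\OO_V\xr{\simeq}f_{W*}\OO_W$ compatible with the classical trace. Routing $f_*(g_W^*)$ through this isomorphism, the identity $E$ reduces to the equality of $\OO_Z$-linear maps
\[
\text{trace}_{W/Z}=\sum_A \ell_A\cdot \bigl[\Tr_{f_A}\circ f_{A*}(c_{f_A})\bigr]\circ \pi_A\quad\text{in }\Hom_{\OO_Z}(f_{W*}\OO_W,\OO_Z),
\]
where on the left I use Proposition \ref{prop-properties-FundCl-ci0}(3) for the finite flat morphism $f_W$, and where $\Hom_{D(Z)}$ collapses to $\Hom_{\OO_Z}$ because $f_{W*}\OO_W$ is locally free.

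The sheaf $\sHom_{\OO_Z}(f_{W*}\OO_W,\OO_Z)$ is torsion-free on the integral scheme $Z$, so restriction to any dense open $U_Z\subset Z$ is injective. I would choose $U_Z$ such that (a) each $A|_{U_Z}$ is finite and flat over $U_Z$ (generic flatness), and (b) distinct components are pairwise disjoint in $W|_{U_Z}$; (b) is achievable because each $g_W(A\cap A')$ is a proper closed subset of $Z$, by properness of $g_W$ and the fact that $\dim(A\cap A')<\dim Z$. Over $U_Z$, $W|_{U_Z}=\bigsqcup_A V_A$ decomposes into clopen subschemes with $(V_A)_{\text{red}}=A|_{U_Z}$, Proposition \ref{prop-properties-FundCl-ci0}(3) identifies $\Tr_{f_A}\circ f_{A*}(c_{f_A})$ with $\text{trace}_{A|_{U_Z}/U_Z}$, and the equality reduces to the commutative-algebra identity $\text{trace}_{S/R}=\ell\cdot\text{trace}_{(S/\sqrt{0})/R}\circ\pi$ for a finite flat extension $R\subset S$ with $S/\sqrt{0}$ also $R$-flat and with unique minimal prime of multiplicity $\ell$. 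By compatibility of traces with the flat base change $R\to\mathrm{Frac}(R)$, this last identity reduces to the standard fact that the trace on an Artinian local algebra of length $\ell$ and residue field $\kappa$ equals $\ell$ times the trace on $\kappa$. The main obstacle is the careful bookkeeping of adjunctions together with the horizontal composition identity $\xi_{\sigma_A}=\xi_{\sigma_W}\circ Rg_{W*}(\xi_{\tau_A})$ and the compatibility of the classical trace with flat base change.
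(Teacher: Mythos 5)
Your proposal is correct and follows essentially the same route as the paper: both use the adjunction of Lemma~\ref{lem-xi}(3) together with Proposition~\ref{proposition-existence-fund-class}(3) to reduce $E(V\xr{f} Y\xl{g} Z)$ to the classical identity $g^*\circ\mathrm{trace}_{V/Y}=\sum_A\ell_A\cdot\mathrm{trace}_{A/Z}\circ\pi_A$ among maps $f_*\OO_V\to g_*\OO_Z$, and then verify it generically over $Z$. The only differences are organizational --- the paper first shrinks $Y$ (via Lemma~\ref{lem-xi}(6) and Proposition~\ref{proposition-vanishing-coh-of-support-dualizing-complex}) so that each $f_A$ is flat before taking adjoints, whereas you take adjoints first and then shrink $Z$ using torsion-freeness of the Hom sheaf, and you spell out the length computation the paper dismisses as ``a straightforward computation'' (minor slip: the sets $f_W(A\cap A')$, not $g_W(A\cap A')$, are the proper closed subsets of $Z$ you need to avoid).
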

\begin{proof}
The cartesian square 
$$
\xymatrix
{
	V \ar[d]_f
	&
	V\times_{Y} Z \ar[l]_{g_1} \ar[d]^{f_1}
	\\
	Y
	&
	Z,\ar[l]^{g}
}
$$
is tor-independent, and $g^!$ preserves bounded complexes (since $Y,Z$ are regular). Therefore we can use Lemma \ref{lem-xi}(6) and Proposition \ref{proposition-vanishing-coh-of-support-dualizing-complex} to obtain an injective map
$$
\Hom(\OO_V,f^!Rg_*\OO_Z) = \Gamma(V,g_{1*}\mathcal{H}^0(f_1^{!}\OO_Z)) \inj{} \Gamma(f^{-1}(U)\times_U g^{-1}(U),g_{1*}\mathcal{H}^0(f_1^{!}\OO_Z))  
$$
for every open $U\subset Y$ such that $g^{-1}(U)\neq \emptyset$. Therefore we may replace $Y$ by any such $U$. In particular, we may suppose that, for every irreducible component $A$ of $V\times_Y Z$, the induced map $f_A$ is flat. 

By  Proposition \ref{proposition-existence-fund-class}, (3), the maps $c_{f}$ and $c_{f_A}$ are adjoint to 
the trace maps.  By Lemma \ref{lem-xi}, (3) we have to show 
		\begin{equation*}
			[f_*\OO_V\xr{{\rm Trace}} \OO_Y\xr{} g_*\OO_Z]= \sum_{A} \ell_A \cdot [f_* \OO_V\xr{f_*(g_A^*)} g_*f_{A*}\OO_A
                                  \xr{g_*({\rm Trace})} g_*\OO_Z],
		\end{equation*}
		which is a straightforward computation.  
\end{proof}

\begin{proposition}\label{proposition-vanishing-implies-claim}
	If $\OO_Y\xr{g^*} Rg_*\OO_Z$ is an isomorphism in $D^b_c(Y)$, then $E(V\xr{} Y\xl{} Z)$ holds. 
	\begin{proof}
		For every non-empty open $U\subset Y$ the map 
		$$
		\Hom_{D^b_c(V)}(\OO_V,f^!\OO_Y) \xr{} \Hom_{D^b_c(f^{-1}(U))}(\OO_{f^{-1}(U)},f^!\OO_{U})
		$$
		is injective (Proposition \ref{proposition-vanishing-coh-of-support-dualizing-complex}). Hence, we may assume
         that $f$ is finite and flat and the statement follows from Lemma \ref{lem-finite-flat-case}. 
	\end{proof}
\end{proposition}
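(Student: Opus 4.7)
The plan is to reduce the statement to the finite flat case already established in Lemma \ref{lem-finite-flat-case}, via a spreading argument powered by Proposition \ref{proposition-vanishing-coh-of-support-dualizing-complex}. The hypothesis that $\OO_Y \xr{g^*} Rg_*\OO_Z$ is an isomorphism yields a canonical identification $f^!Rg_*\OO_Z \cong f^!\OO_Y$ in $D(V)$, so both sides of the desired equality in $\Hom_{D(V)}(\OO_V, f^!Rg_*\OO_Z)$ may be viewed as elements of $\Hom_{D(V)}(\OO_V, f^!\OO_Y)$.

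Since $f: V \to Y$ is dominant and generically finite between integral schemes, it satisfies condition \eqref{*} by Remark \ref{rmk-cond*}(1). Hence Proposition \ref{proposition-vanishing-coh-of-support-dualizing-complex} implies that for every dense open $U \subset Y$ the restriction map
\[
\Hom_{D(V)}(\OO_V, f^!\OO_Y) \inj \Hom_{D(f^{-1}(U))}\bigl(\OO_{f^{-1}(U)}, (f|_{f^{-1}(U)})^!\OO_U\bigr)
\]
is injective, and it therefore suffices to verify the identity after restriction to some dense open $U \subset Y$. I would choose such a $U$ so that $f|_{f^{-1}(U)} : f^{-1}(U) \to U$ is finite and flat; this is possible because $f$ becomes finite on the preimage of a dense open (by generic finiteness), and generic flatness lets us shrink further without losing finiteness.

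The remaining task is to verify that both sides of $E(V \to Y \xl{} Z)$ restrict to the corresponding sides of $E(f^{-1}(U) \to U \xl{} g^{-1}(U))$, after which Lemma \ref{lem-finite-flat-case} (applied to the finite flat morphism $f|_{f^{-1}(U)}$) concludes. The relevant compatibilities are already in place: the fundamental classes $c_f$ and $c_{f_A}$ are compatible with open restriction by Proposition \ref{prop-properties-FundCl-ci0}(1) together with the uniqueness clause of Proposition \ref{proposition-existence-fund-class}; the twisted base change $\xi_{\sigma_A}$ is compatible with open restriction by Lemma \ref{lem-xi}(7); and the irreducible components of $V \times_Y Z$ that dominate $Z$ restrict bijectively, with preserved multiplicities $\ell_A$, to the irreducible components of $f^{-1}(U) \times_U g^{-1}(U)$ dominating $g^{-1}(U)$. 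The only genuine ingredient is the injectivity given by Proposition \ref{proposition-vanishing-coh-of-support-dualizing-complex}; the rest is routine bookkeeping, so no real obstacle arises.
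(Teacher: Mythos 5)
Your proposal is correct and follows essentially the same route as the paper: identify $f^!Rg_*\OO_Z$ with $f^!\OO_Y$ via the hypothesis, use the injectivity of restriction to dense opens from Proposition \ref{proposition-vanishing-coh-of-support-dualizing-complex} to shrink $Y$ until $f$ is finite and flat, and then invoke Lemma \ref{lem-finite-flat-case}. The compatibility checks you spell out (restriction of $c_f$, of $\xi_{\sigma_A}$, and of the dominating components with their multiplicities) are exactly what the paper leaves implicit in ``we may assume that $f$ is finite and flat.''
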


\begin{lemma}\label{lemma-composition}
	Consider a commutative diagram 
	$$
	\xymatrix
	{
		V\ar[d]
		&
		P_V \ar[l]\ar[d]
		\\
		Y
		&
		P\ar[l]
		&
		Z,\ar[l]
	}
	$$
	where $P\xr{} Y$ is projective and surjective, $P$ is regular, $P_V$ is integral, $P_V\xr{} V\times_Y P$ is a closed immersion, and $P_V$ is the only irreducible component of $V\times_Y P$ dominating $P$. 
	Assume $E(V\xr{} Y\xl{} P)$ and $E(P_V\xr{} P\xl{} Z)$ hold. Then $E(V\xr{} Y\xl{} Z)$ holds.    
	\begin{proof}
		Note that the irreducible components of $V\times_Y Z$  dominating $Z$ are
exactly the irreducible components of $P_V\times_P Z$ dominating $Z$. 
Thus the statement follows via a direct computation from Lemma \ref{lem-xi}, (1).
	\end{proof}
\end{lemma}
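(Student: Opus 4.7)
The plan is to chain $E(V\to Y\leftarrow P)$ with $E(P_V\to P\leftarrow Z)$ via the composition of admissible squares from Lemma \ref{lem-xi}(1). Write $g = p\circ h$ with $p\colon P\to Y$ and $h\colon Z\to P$, so that $g^*$ factors as $\OO_Y\xr{p^*}Rp_*\OO_P\xr{Rp_*(h^*)}Rg_*\OO_Z$. Let $\tilde g\colon P_V\to V$ and $\tilde f\colon P_V\to P$ denote the projections, and let $\sigma_{P_V}$ be the admissible square they form with $f$ and $p$. For each irreducible component $A$ of $P_V\times_P Z$ dominating $Z$, denote by $\sigma'_A$ the admissible square over $(P_V\to P\leftarrow Z)$ and by $\sigma_A$ the one over $(V\to Y\leftarrow Z)$; by construction $\sigma_A$ is the composition of $\sigma_{P_V}$ and $\sigma'_A$ in the sense of Lemma \ref{lem-xi}(1), which therefore yields
\[\xi_{\sigma_A}(\OO_Z)=\xi_{\sigma_{P_V}}(Rh_*\OO_Z)\circ R\tilde g_*\bigl(\xi_{\sigma'_A}(\OO_Z)\bigr).\]

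First I apply $E(V\to Y\leftarrow P)$ to $f^!(p^*)\circ c_f$; since $P_V$ is the only component of $V\times_Y P$ dominating $P$, the resulting sum collapses to a single term $\ell_{P_V}\cdot\xi_{\sigma_{P_V}}(\OO_P)\circ R\tilde g_*(c_{\tilde f})\circ\tilde g^*$. Post-composing with $f^!(Rp_*(h^*))$ and using naturality of $\xi_{\sigma_{P_V}}\colon R\tilde g_*\tilde f^!\Rightarrow f^!Rp_*$ on the morphism $h^*\colon\OO_P\to Rh_*\OO_Z$, the left hand side of $E(V\to Y\leftarrow Z)$ takes the form
\[\ell_{P_V}\cdot\xi_{\sigma_{P_V}}(Rh_*\OO_Z)\circ R\tilde g_*\bigl(\tilde f^!(h^*)\circ c_{\tilde f}\bigr)\circ\tilde g^*.\]
Next I use $E(P_V\to P\leftarrow Z)$ to expand the inner expression as $\sum_A\ell'_A\cdot\xi_{\sigma'_A}\circ R\tilde g_{A*}(c_{\tilde f_A})\circ\tilde g_A^*$, substitute, and apply the displayed identity for $\xi_{\sigma_A}$ together with $R\tilde g_*\circ R\tilde g_{A*}=Rg_{A*}$, $R\tilde g_*(\tilde g_A^*)\circ\tilde g^*=g_A^*$ (where $g_A=\tilde g\circ\tilde g_A$), and $\tilde f_A=f_A$ to reach
\[\mathrm{LHS}=\sum_A\ell_{P_V}\,\ell'_A\cdot\xi_{\sigma_A}\circ Rg_{A*}(c_{f_A})\circ g_A^*.\]

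It remains to identify this with the right hand side of $E(V\to Y\leftarrow Z)$. Base-changing $P_V\hookrightarrow V\times_Y P$ along $Z\to P$ presents $P_V\times_P Z$ as a closed subscheme of $V\times_Y Z$, and the hypothesis that $P_V$ is the only component of $V\times_Y P$ dominating $P$ forces every irreducible component of $V\times_Y Z$ dominating $Z$ to lie inside $P_V\times_P Z$, so both sums are indexed by the same set. The main algebraic content, which I view as the one step worth carrying out carefully, is the multiplicativity $\ell_A=\ell_{P_V}\cdot\ell'_A$. This follows from a length computation in $T:=\OO_{V\times_Y P,\eta_{P_V}}$: filtering $T$ by powers of its maximal ideal realizes it as an iterated extension of $\ell_{P_V}$ copies of $k(P_V)$, and tensoring this filtration with $k(Z)$ over $k(P)$ and localizing at the prime corresponding to $\eta_A$ gives $\ell_A=\mathrm{length}\,(T\otimes_{k(P)}k(Z))_{\eta_A}=\ell_{P_V}\cdot\ell'_A$, as required.
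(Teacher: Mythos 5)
Your categorical bookkeeping is correct and is exactly the ``direct computation'' the paper has in mind: factor $g^*$ through $Rp_*\OO_P$, apply $E(V\to Y\leftarrow P)$ so that the sum collapses to the single term for $P_V$, use naturality of $\xi_{\sigma_{P_V}}$ against $h^*\colon \OO_P\to Rh_*\OO_Z$, insert $E(P_V\to P\leftarrow Z)$, and reassemble with Lemma \ref{lem-xi}(1). The gap lies in the two geometric inputs at the end, and in particular the multiplicity identity --- the step you single out as the one worth doing carefully --- is not established by your argument. The expression $T\otimes_{k(P)}k(Z)$ only makes sense when $Z\to P$ is dominant, and in both places the paper invokes this lemma ($Z\inj \P^n_Y$ in Corollary \ref{cor-closed-implies-proj}, $E\inj \tilde{Y}$ in Proposition \ref{proposition-divisor-implies-closed}) the map $Z\to P$ is a closed immersion, so $k(Z)$ is not a $k(P)$-algebra. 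The generic fibre of $V\times_Y Z\to Z$ is the fibre of $V\times_Y P\to P$ over the point $z_0:=h(\eta_Z)$, base-changed to $k(\eta_Z)$; the local rings that enter are those of $V\times_Y P$ at points $w$ that are \emph{specializations} of $\eta_{P_V}$, not the Artinian ring $T=\OO_{V\times_Y P,\eta_{P_V}}$ at the generic point. The length of $T$ does not by itself control the length of $\bigl(\OO_{V\times_Y P,w}\otimes_{\OO_{P,z_0}}k(\eta_Z)\bigr)_{\eta_A}$; one needs an additional argument (e.g.\ flatness or equimultiplicity of $V\times_Y P$ along $P_V$ at $w$, or a reduction to a locus where $f$ is finite and flat), so $\ell_A=\ell_{P_V}\cdot\ell'_A$ remains unproved.

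The identification of the index sets suffers from the same confusion between the generic point of $P$ and the point $z_0$. A component $A$ of $V\times_Y Z=(V\times_Y P)\times_P Z$ dominating $Z$ lies over $z_0$, which in the applications is \emph{not} the generic point of $P$; a priori $A$ could therefore lie over an irreducible component of $V\times_Y P$ that does not dominate $P$ but whose image in $P$ contains $z_0$. The hypothesis that $P_V$ is the unique dominating component does not by itself exclude this. What saves the statement is the generic finiteness of $V\times_Y Z\to Z$, which forces the fibre of $V\times_Y P\to P$ over $z_0$ to be finite, combined with a lower bound (using the regularity of $Y$) on the dimension of every irreducible component of $V\times_Y P$: a non-dominating component then has everywhere positive-dimensional fibres over its image in $P$ and so cannot meet a finite fibre. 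Neither of these points is spelled out in the paper either, but your proposed justifications do not close them; as written, your proof establishes the formal identity only modulo the unproved equality of index sets and of multiplicities.
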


\begin{corollary}\label{cor-closed-implies-proj}
	Assume $E(V\xr{f} Y\xl{g} Z)$ holds for all closed immersions $g$. 
    Then $E(V\xr{f} Y\xl{g} Z)$ also holds for all projective morphisms $g$.
\end{corollary}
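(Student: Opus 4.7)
The plan is to apply Lemma \ref{lemma-composition} to the standard factorization of a projective morphism through projective space. Given a projective morphism $g: Z\to Y$ satisfying the hypotheses of the setup, factor it as $g=\pi\circ i$, where $i: Z\inj P:=\P^n_Y$ is a closed immersion and $\pi: P\to Y$ is the structural projection. Set $P_V:=V\times_Y P=\P^n_V$, which is integral since $V$ is, and regular whenever $V$ is; in any case $P$ itself is regular because $Y$ is. The canonical map $P_V\to V\times_Y P$ is the identity, so in particular a closed immersion, and $P_V$ is the unique irreducible component of $V\times_Y P$ dominating $P$ since $V\to Y$ is dominant. This verifies the geometric hypotheses of Lemma \ref{lemma-composition}.

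Next I would verify that the two triples to which the lemma applies satisfy the running hypotheses of the Setup. For $V\xr{f} Y\xl{\pi} P$: the morphism $f$ is dominant and generically finite by assumption, and the base change $V\times_Y P=P_V\to P$ is the pullback of $f$, hence generically finite, so condition (2) holds. For $P_V\xr{f_P} P\xl{i} Z$: the morphism $f_P$ is the base change of $f$, hence dominant of finite type and generically finite, while the base change $P_V\times_P Z=V\times_Y Z\to Z$ is generically finite by assumption on the original diagram; condition (1) holds because $P_V$ is integral and $P, Z$ are regular, and $i$ is projective because it is a closed immersion.

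It remains to establish the two instances of $E$ required by Lemma \ref{lemma-composition}. For $E(V\xr{f} Y\xl{\pi} P)$, the map $\OO_Y\xr{\pi^*} R\pi_*\OO_{\P^n_Y}$ is an isomorphism in $D^b_c(Y)$ by the standard computation of the cohomology of projective space, so Proposition \ref{proposition-vanishing-implies-claim} applies and gives the required identity. For $E(P_V\xr{f_P} P\xl{i} Z)$, we are in the case of a closed immersion and so the identity holds by the hypothesis of the corollary. Lemma \ref{lemma-composition} then delivers $E(V\xr{f} Y\xl{g} Z)$.

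There is no serious obstacle; the only point requiring a moment's care is checking that the Setup conditions transfer to the auxiliary triple $P_V\to P\leftarrow Z$, and in particular that its base-change generic finiteness hypothesis coincides with the original one, since $P_V\times_P Z=V\times_Y Z$. Once this bookkeeping is done, the corollary is an immediate concatenation of Proposition \ref{proposition-vanishing-implies-claim}, the hypothesis, and Lemma \ref{lemma-composition}.
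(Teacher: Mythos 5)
Your proof is correct and follows exactly the paper's route: factor $g$ through $\P^n_Y$, use $R\pi_*\sO_{\P^n_Y}=\sO_Y$ together with Proposition \ref{proposition-vanishing-implies-claim} for the first square, the closed-immersion hypothesis for the second, and conclude by Lemma \ref{lemma-composition}. The paper states this in one line; you have merely spelled out the (correct) verification of the hypotheses.
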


\begin{proof}
This follows directly from Proposition \ref{proposition-vanishing-implies-claim}, Lemma \ref{lemma-composition} 
and the equality $R\pi_*\sO_{\P^n_Y}=\sO_Y$, where $\pi: \P^n_Y\to Y$ is the projection.
\end{proof}

\begin{proposition}\label{proposition-divisor-implies-closed}
Assume $E(V\xr{f}Y \xl{g} Z)$ holds for any closed immersion $g$ of codimension 1.
Then $E(V\xr{f} Y\xl{g} Z)$ holds for any closed immersion $g$.
	\begin{proof}
Assume $g: Z\inj Y$ is a closed immersion. Since $Y$ and $Z$ are regular, $g$ is a regular closed immersion.
		Let $\tilde{Y}$ and $\tilde{V}$ denote the blow up of $Y$ in $Z$ and $V$ in $V\times_Y Z$, respectively. 
We form the commutative diagram 
		$$
		\xymatrix
		{
			V \ar[d]_{f}
			&
			\tilde{V}\ar[d]\ar[l]
			\\
			Y
			&
			\tilde{Y} \ar[l]^{\pi}
			&
			E, \ar[l]
		}
		$$
 where $E$ is the exceptional divisor.  Denote by $\pi_E: E\to Z$ the base change of $\pi$ along $g$.
As is well-known we have $R\pi_*\OO_{\tilde{Y}}=\OO_Y$.
Thus by Proposition \ref{proposition-vanishing-implies-claim} $E(V\xr{} Y\xl{} \tilde{Y})$ holds and by assumption
$E(\tilde{V}\xr{} \tilde{Y}\xl{}E)$ also holds. Hence $E(V\xr{f} Y\xl{g\circ\pi_E} E)$ holds 
by Lemma \ref{lemma-composition}.

As $\pi_E: E\to Z$ is a projective bundle we have $R\pi_{E*}\sO_E\cong \sO_Z$ and
the irreducible components of $V\times_Y Z$ correspond via $A\mapsto A\times_Z E$ to the irreducible components of
 $V\times_Y E$, further $\ell_A=\ell_{A\times_Z E}$. 
Set $E_A:=A\times_Z E$ and form the admissible squares
\[\xymatrix{V\ar[d]_f & A\ar[d]|-{f_A}\ar[l]_{g_A}\ar@{}[dl]|-{\sigma_A} & 
                                                           E_A\ar[l]_{\pi_{E_A}}\ar[d]^{f_{E_A}}\ar@{}[dl]|-{\sigma_{E_A}}\\
                Y   & Z\ar[l]^{g} & E\ar[l]^{\pi_E}.
}\]
We denote the  big outer admissible square by $\sigma_{V, E_A}$.
Let $A$ be an irreducible component of $V\times_Y Z$ dominating $Z$, 
Proposition \ref{proposition-vanishing-implies-claim} implies that $E(A\xr{f_{A}} Z \xl{\pi_E} E)$ holds, i.e.
		\begin{equation}\label{equation-cA-via-blowup}
	c_{f_A}=\left[\OO_A= R\pi_{E_A*}\sO_{E_A} \xr{R\pi_{E_A*}(c_{f_{E_A}})} 
                         R\pi_{E_A*}f^!_{E_A}\OO_E   \xr{\xi_{\sigma_{E_A}}} f_A^!R\pi_{E*}\sO_E=f_A^!\OO_Z\right].
		\end{equation}
We obtain
\begin{multline*}
			\left[ \OO_V \xr{} g_{A*}R\pi_{E_A*} \OO_{E_A} \xr{g_{A*}R\pi_{E_A*}(c_{f_{E_A}})}
               g_{A*}R\pi_{E_A*}f^!_{E_A}\OO_E \xr{\xi_{\sigma_{V,E_A}}} f^!g_*\OO_Z  \right] =\\
	\left[ \OO_V \xr{} g_{A*} R\pi_{E_A*} \OO_{E_A} \xr{} 
 g_{A*}R\pi_{E_A*}f^!_{E_A}\OO_E \xr{g_{A*}\xi_{\sigma_{E_A}}} g_{A*}f_A^!\OO_Z 
                  \xr{\xi_{\sigma_A}} f^!g_*\OO_Z  \right]\\
= \left[ \OO_V \xr{} g_{A*}\OO_{A} \xr{c_{f_A}} g_{A*}f^!_{A}\OO_Z \xr{\xi_{\sigma_A}} f^!g_*\OO_Z  \right].
		\end{multline*}
		Here, the first equality follows from Lemma \ref{lem-xi}, (1) and 
 the second equality follows from \eqref{equation-cA-via-blowup}. Thus $E(V\xr{f} Y\xl{g} Z)$ holds if and only if
$E(V\xr{f} Y\xl{g\circ \pi_E} E)$ holds, which proves the proposition.
\end{proof}
\end{proposition}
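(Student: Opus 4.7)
The strategy is to reduce to the codimension $1$ case by blowing up. Since $Y$ and $Z$ are both regular, the closed immersion $g\colon Z\hookrightarrow Y$ is a regular embedding, so the blow-up $\pi\colon\tilde Y\to Y$ of $Y$ along $Z$ is again regular, with exceptional divisor $i_E\colon E\hookrightarrow\tilde Y$ of codimension $1$ and structure map $\pi_E\colon E\to Z$ a projective bundle. The two vanishings $R\pi_*\OO_{\tilde Y}=\OO_Y$ and $R\pi_{E*}\OO_E=\OO_Z$ will drive the argument.

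First I would introduce $\tilde V$, the blow-up of $V$ along $V\times_Y Z$, so as to fit everything into the compatible diagram with $V\leftarrow\tilde V$ mapping over $Y\leftarrow\tilde Y\leftarrow E$. Since $R\pi_*\OO_{\tilde Y}=\OO_Y$, Proposition \ref{proposition-vanishing-implies-claim} gives the assertion $E(V\to Y\leftarrow\tilde Y)$ for free. The codimension $1$ hypothesis applied to $i_E$ yields $E(\tilde V\to\tilde Y\leftarrow E)$. Composing these via Lemma \ref{lemma-composition} then produces $E(V\to Y\leftarrow E)$ where the second arrow is the composite closed immersion $g\circ\pi_E$; note that $\tilde V$ is the unique irreducible component of $V\times_Y\tilde Y$ dominating $\tilde Y$, so the hypotheses of Lemma \ref{lemma-composition} are satisfied.

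The real content, and the main obstacle, is converting $E(V\to Y\leftarrow E)$ back into $E(V\to Y\leftarrow Z)$. The key observation is that the irreducible components of $V\times_Y Z$ dominating $Z$ are in bijection with those of $V\times_Y E$ dominating $E$ via $A\mapsto E_A:=A\times_Z E$, with matching multiplicities $\ell_A=\ell_{E_A}$ since $\pi_E$ is a projective bundle. For each such $A$, Proposition \ref{proposition-vanishing-implies-claim} applied to the triple $A\to Z\leftarrow E_A$ (using $R\pi_{E_A*}\OO_{E_A}=\OO_A$) rewrites $c_{f_A}$ as the appropriate composition involving $c_{f_{E_A}}$ and the $\xi$-map for the admissible square above $\pi_E$. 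Plugging this expression into the sum appearing in $E(V\to Y\leftarrow E)$ and then collapsing the stacked admissible squares via the composition rule of Lemma \ref{lem-xi}(1) should yield exactly the sum appearing in $E(V\to Y\leftarrow Z)$. I expect this bookkeeping to be the technical heart: one must track the compatibility of the fundamental classes $c_{f_A}$ and $c_{f_{E_A}}$ through the two stacked squares and verify that the $\xi_{\sigma_{E_A}}$-terms assemble into $\xi_{\sigma_A}$ after push-forward by $R\pi_{E_A*}$.
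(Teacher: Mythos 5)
Your proposal follows the paper's proof essentially step for step: the same blow-up of $Y$ along $Z$ with exceptional divisor $E$, the same use of $R\pi_*\OO_{\tilde Y}=\OO_Y$ with Proposition \ref{proposition-vanishing-implies-claim}, the codimension-one hypothesis, and Lemma \ref{lemma-composition} to obtain $E(V\xr{} Y\xl{} E)$, and the same descent back to $Z$ via the component bijection $A\mapsto E_A$ together with Lemma \ref{lem-xi}, (1). The only quibble is a notational slip in the last step: the instance invoked is $E(A\xr{f_A} Z\xl{\pi_E} E)$, whose hypothesis is $R\pi_{E*}\OO_E=\OO_Z$, with $E_A$ appearing as the unique dominating component of $A\times_Z E$ rather than as the third vertex of the triple.
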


\section{Proofs}\label{proofs}

\begin{proof}[\bf Proof of Theorem \ref{thm2}.]
By Corollary \ref{cor-closed-implies-proj} and Proposition \ref{proposition-divisor-implies-closed}
we can assume that $g: Z\inj Y$ is a closed immersion of codimension 1.

{\em 1.Step: Reduction to $V$ being normal.}
 Let $\nu:\tilde{V}\xr{} V$ be the normalization. We claim that via the map
		$$
		\Hom(\OO_{\tilde{V}},(f\circ \nu)^!Rg_*\OO_Z)\xr{} 
                             \Hom(\OO_{V},f^!Rg_*\OO_Z), \quad a\mapsto  {\rm Tr}_{\nu}(\nu_*(a)\circ \nu^*),
		$$
		both sides of $E(\tilde{V}\xr{f\circ \nu} Y\xl{g} Z)$ are mapped to the corresponding side of
		$E(V\xr{f} Y\xl{g} Z)$. For the left-hand side this follows from the construction of $c_f$, 
             see \eqref{proposition-existence-fund-class1}. 
	       For the right-hand side first observe that if $B\xr{\nu_B} A$ is a finite surjective morphism between integral schemes, then 
              by Lemma \ref{lemma-push-forward}
		$$
		{\rm Tr}_{\nu_B}(\nu_{B*}(c_{f_A\circ \nu_B})\circ \nu_B^*)=\deg(B/A)\cdot c_{f_A}.
		$$
		Thus the claim follows from 
		$$
		\ell_A=\sum_B \ell_B\cdot \deg(B/A),
		$$	
		where $A$ is an irreducible component of $V\times_Y Z$, and the sum runs over all irreducible components 
          of $\tilde{V}\times_Y Z$ mapping to $A$, see \cite[Ex.~A.3.1]{F}.  

		{\em 2.Step: Reduction to $V$ being regular.} By our assumption on $g$, the following
              diagram is tor-independent 
		$$
		\xymatrix
		{
		V \ar[d]_{f}
		&
		V\times_Y Z \ar[l]_{g_1} \ar[d]^{f_1}\ar@{}[dl]|-{\sigma}
		\\
		Y
		&
		Z.\ar[l]^{g}
		}
		$$
Hence $\xi_{\sigma}(\OO_Z): g_{1*} f_1^!\OO_Z\xr{} f^!g_*\OO_Z$ is an isomorphism, by Lemma \ref{lem-xi}, (6).
Further $f_1$ satisfies condition \eqref{*}, by \cite[Prop. (5.6.5)]{EGAIV2}.
Since we have to prove an equality in $\Hom(\sO_V, f^!g_*\sO_Z)=\Hom(\sO_V, g_{1*}f_1^!\sO_Z)$
we can use Proposition \ref{proposition-vanishing-coh-of-support-dualizing-complex} 
to remove a codimension $\geq 2$ subset of $V$. Thus we can assume that $V$ is regular and 
the irreducible components of $V\times_Y Z$ are disjoint.

{\em 3.Step: End of proof.} 
Let us write $V\times_Y Z=\coprod_{i=1}^r A_i$ for the decomposition into connected components. 
Let $s\in \Hom(\OO_V,f^!g_*\OO_Z)$ be the element corresponding to 
$\OO_V\xr{c_f}f^!\OO_Y \xr{f^!(g^*)}f^!g_*\OO_Z$. We denote by $(s_{A_i})_i$ the image of $s$ via the map
		\begin{multline*}
\Hom(\OO_V,f^!g_*\OO_Z) \xr{=} \Gamma(V,\mathcal{H}^0(f^!g_*\OO_Z)) \xr{\cong} 
 \Gamma(V\times_Y Z,\mathcal{H}^0(f_1^!\OO_Z)) \\ 
\xr{=}\bigoplus_i \Gamma(A_i,\mathcal{H}^0(f_{A_i}^!\OO_Z)).
		\end{multline*}
		We claim that $s_{A_i}=0$ if $A_i$ does not dominate $Z$. Indeed, $V$ is regular, hence 
 $f_{A_i}:A_i\xr{} Z$ is a ci0 morphism. 
Using the definition of the fundamental class, Definition \ref{defn-FundamentalClass-ci0}, and
\eqref{delta2} one directly checks that $s_{A_i}=c_{f_{A_i}}$.
Thus Proposition \ref{prop-properties-FundCl-ci0}, (4) implies $s_{A_i}=0$.

Having no contributions from the non-dominant irreducible components, we may replace $Y$ by any open subset $U$ 
such that $U\cap Z\neq \emptyset$ and assume that $f^{-1}(U)\xr{} U$ is finite and flat. 
Now the statement follows from Lemma \ref{lem-finite-flat-case}. 
\end{proof}

\begin{proof}[\bf Proof: Theorem \ref{thm2}$\Rightarrow$ Theorem \ref{thm1}.]
We can assume that $Y$ is noetherian.  Since $f$ is birational, 
the only irreducible component of $X\times_Y X$ which dominates $X$ is 
the diagonal $\Delta$. Let $\sigma_\Delta$ be the commutative square 
 \[\xymatrix{ X\ar[d]_f & \Delta\ar[d]^{\simeq}\ar[l]_{\simeq}\ar@{}[dl]|-{\sigma_\Delta}\\
                   Y & X\ar[l]^f.
}\]
By Lemma \ref{lem-xi}, (3),  $\xi_{\sigma_\Delta}$ is the natural transformation $ \id\to f^!Rf_*$,
which by adjunction corresponds to the identity on $Rf_*$.  Theorem \ref{thm2} gives
$f^*\circ c_f=\xi_{\sigma_{\Delta}}$. Thus by adjunction the identity on $Rf_*\sO_X$
factors as $Rf_*\sO_X\to \sO_Y\xr{f^*} Rf_*\sO_X$. This proves Theorem \ref{thm1}.
\end{proof}

\begin{proof}[\bf Proof of Theorem \ref{thm-commutative-algebra}]
	Assertion (1) is equivalent to $H^i(X,\OO_X)=0$ for all $i>0$, by \cite[Theorem~4.1]{Lipman-CM}. Assertion (2) is equivalent to $H^i(X,\omega_X)=0$ for all $i>0$, in view of \cite{SanchoSalas}  (see \cite[Theorem~4.3]{Lipman-CM} and the following remark). Therefore it follows from Theorem \ref{thm1} by duality. 
\end{proof}

\begin{proof}[\bf Proof of Theorem \ref{thm-points}.]
Let $p$ be the characteristic of $k$. Again, we may assume that $Y$ is noetherian. Let $W_n\sO_X$ denote the sheaf of ($p$-typical) Witt vectors of length $n$ and
$W\sO_X=\varprojlim_n W_n\sO_X$ the sheaf of  ($p$-typical) Witt vectors. 
Set $W:=W(k)$ and $K_0:= {\rm Frac}(W)=W[\frac{1}{p}]$. By \cite[Cor. 1.3, Prop. 6.3]{BBE07}
it suffices to show
\[H^0(X_s, W\sO_{X_s})\otimes_W K_0= K_0\quad \text{and}\quad 
                            H^i(X_s, W\sO_{X_s})\otimes_W K_0=0\quad i\ge 1.\]
If $\kappa$ is the residue field of the image point of $s$ in $Y$, then the natural inclusion $W(\kappa)\inj W$
is \'etale. Thus it suffices to prove the above equalities in the case where  $s$ is a closed immersion, i.e.
$s\in Y$ is a closed point with residue field $k$.

Set $A=H^0(X_s,\sO_{X_s})$. Then $\Spec\, A\to \Spec\, k$ is finite, surjective and geometrically connected, hence radical. 
Since $k$ is perfect we obtain that $A$ is an artinian local $k$-algebra with residue field $k$. In particular
\[H^0(X_s, W\sO_{X_s})\otimes K_0=W(A)\otimes K_0= K_0,\]
where the second equality follows from $F \circ V=p=V\circ F$ on $W(A)$,
where $F: W(A)\to W(A)$, $(a_0,a_1,\ldots)\mapsto (a_0^p,a_1^p,\ldots)$ is the Frobenius morphism on the 
Witt vectors.

Denote by $f_p:X_p= X\times_\Z \F_p\to Y_p$ the base change of $f$ over $\F_p$. If $X_p=X$ then 
\begin{equation}\label{equation-Xp}
R^i f_{p*}\sO_{X_p}=0, \quad \text{for all $i\ge 1$,}
\end{equation}
follows immediately from Theorem \ref{thm1}. If $p\neq 0$ in $\OO_X$ then we can use the exact sequence
\[0\to\sO_X\xr{\cdot p}\sO_X\to \sO_{X_p}\to 0\]
to prove \eqref{equation-Xp}.

For all $n\ge 1$ we have an exact sequence of sheaves of abelian groups 
\[0\to W_{n-1}\sO_{X_p}\xr{V}W_n\sO_{X_p}\to \sO_{X_p}\to 0,\]
where $V$ is the Verschiebung, $V(a_0,\ldots, a_{n-2})=(0,a_0,\ldots, a_{n-2})$ and 
the map on the right is the restriction $(a_0,\ldots, a_{n-1})\mapsto a_0$.
Hence $R^i f_*W_n\sO_{X_p}=0$, for all $n,i\ge 1$. 
Further we have exact sequences for all $i\ge 1$
\[0\to R^1\varprojlim_n R^{i-1}f_* W_n\sO_{X_p}\to R^i f_* W\sO_{X_p}\to 
                  \varprojlim_n R^i f_*W_n\sO_{X_p}\to 0. \]
Thus also $R^i f_*W\sO_{X_p}=0$ for all $i\ge 1$. (For the case $i=1$ we use that the restriction maps
$f_*W_n\sO_{X_p}\to f_*W_{n-1}\sO_{X_p}$ are surjective, which implies the vanishing of 
$R^1\varprojlim_n f_* W_n\sO_{X_p}$.)

Now denote by $\sI$ the ideal sheaf of $X_s$ in $X_p$.  We obtain a long exact sequence
\[\cdots \to (R^i f_* W\sO_{X_p})\otimes K_0\to  (R^i f_{s*}W\sO_{X_s})\otimes K_0
                                                                              \to (R^{i+1} f_*W\sI)\otimes K_0\to \cdots .\]
By the above the term on the left vanishes and the term on the right vanishes by  \cite[Prop 4.6.1]{CR12},
which is a slight modification of \cite[Thm 2.4, (i)]{BBE07}. (In {\it loc. cit.} there is a general assumption that
the schemes considered have to be of finite type over a perfect field. One checks immediately 
that this assumption is not used in the parts we refer to.)
This proves Theorem \ref{thm-points}.
\end{proof}

\end{document}